\newtheorem{theorem}{Theorem}[section]
\newtheorem{lemma}[theorem]{Lemma}
\newtheorem{proposition}[theorem]{Proposition}
\newtheorem{definition}[theorem]{Definition}
\newtheorem{remark}[theorem]{Remark}
\numberwithin{equation}{section}
\newcommand{\R}{\mathbb{R}}
\newcommand{\N}{\mathbb{N}}
\def\eps{\varepsilon}
\def\om{\Omega}
\def\barom{\overline{\Omega}}
\def\Dom{\partial \Omega}
\def\W{\mbox{W}}
\def\Lip{{\rm Lip}}
\newcommand{\Pb}{\mathscr{P}}
\newcommand{\C}{\mathcal{C}}
\newcommand\w{\rightharpoonup}
\newcommand\ws{\stackrel{*}{\rightharpoonup}}
\newcommand{\MP}{\mathcal{P}}
\newcommand{\M}{\mathcal{M}}
\newcommand{\dsp}{\displaystyle}
\newcommand{\clomega}{\overline{\Omega}}
\newcommand{\Wa}{\mathcal{W}_1}
\newcommand{\CAAnewsection}[1]{\addtocounter{section}{1}\section*{#1}\setcounter{equation}{0}\setcounter{theorem}{0}}
\newcommand{\CAAnewsubsection}[1]{\subsection*{#1}\noindent}
\begin{document}
\noindent {{\it Communications in Applied Analysis} {\bf xx} (200x),
  no. N, xxx--xxx}
               
\title[The $\infty$-eigenvalue problem and optimal transportation\hfill]{\mbox{}\\[1cm]The
  $\infty$-eigenvalue problem and \\ a problem of optimal transportation}
\author[\hfill T. Champion]{Thierry Champion$^1$}
\author[L. De Pascale]{Luigi De Pascale$^2$}
\author[C. Jimenez]{Chlo\'e Jimenez$^3$}
\begin{abstract}
\vspace{-.3cm}

\begin{center}
$^1$ Institut de Math\'ematiques de Toulon et du Var (Imath),\\
Universit\'e du Sud Toulon-Var, Avenue de l'Universit\'e, BP 20132,\\
83957 La Garde cedex, FRANCE \\
\& Centro de Modelamiento Matem\'atico (CMM), Universidad de Chile,\\
  Blanco Encalada 2120, Piso 7, Santiago de Chile, CHILE\\
{\em E-mail:} champion\symbol{64}univ-tln.fr\\[0.3cm]

$^2$ Dipartimento di Matematica Applicata, Universit\'a di Pisa\\
 Via Buonarroti 1/c, 56127  Pisa, ITALY \\
{\em E-mail:} depascal\symbol{64}dm.unipi.it \\[0.3cm] 

$^3$Laboratoire de Math\'ematiques de Brest, UMR 6205\\
Universit\'e de Brest, 6 avenue le Gorgeu,\\
CS 93837, F-29238 BREST Cedex 3 FRANCE \\
{\em E-mail:} chloe.jimenez\symbol{64}univ-brest.fr
\end{center}
\vspace{-.2cm}

\end{abstract}

\thanks{\hspace{-.5cm}
\rm{Received October 13, 2008	 \hfill 1083-2564 \$15.00 \copyright Dynamic Publishers, Inc.}}

\maketitle
\thispagestyle{empty}

\centerline{\it Dedicated to Prof. Espedito De Pascale}
\centerline{\it in occasion of his retirement.}

\vskip\baselineskip
{\footnotesize\noindent{\bf ABSTRACT.}
The so-called eigenvalues and eigenfunctions of the infinite Laplacian
$\Delta_\infty$ are defined through an asymptotic study of that
of the usual $p$-Laplacian $\Delta_p$, this brings to a characterization
via a non-linear eigenvalue problem for a PDE satisfied in the viscosity sense.
In this paper, we obtain an other characterization of the first eigenvalue
via a problem of optimal transportation, and recover properties of the
first eigenvalue and corresponding positive eigenfunctions.

\noindent{\bf AMS (MOS) Subject Classification.} 99Z00. Insert subject classifications}

\CAAnewsection{1. INTRODUCTION}
 
An eigenvalue of the $p$-Laplacian is a real number $\lambda \in \R$
such that the problem
\begin{equation*}
\left\{\begin{array}{ll}
-div (|Du|^{p-2}Du)= \lambda |u|^{p-2}u & \mbox{in} \ \Omega, \\
u=0 & \mbox{on}\ \partial \Omega,
\end{array}\right.
\end{equation*}
has at least one non trivial solution in $W^{1,p}_0(\Omega)$.
Here solution is intended in the distributional sense and $\Omega$ is
assumed to be a regular, bounded, open subset of $\R^N$. 
Much is unknown about the eigenvalues of the $p$-Laplacian and we will
give a short presentation of some related open questions in section  \S 2.

In this paper, we shall focus on the asymptotic of the above
eigenvalue problem as the parameter $p$ goes to $+\infty$.
This is a standard strategy in analysis (for example in the homogenization and
relaxation theories) to look at the asymptotic problem and then to try to deduce
qualitative and quantitative informations on the approximating problems
and the limit problem as well as reasonable conjectures.

The asymptotic as $p \to \infty$ of the $p$-Laplacian eigenvalue
problem  was
introduced in \cite{JuuLinMan} and then perfectioned in
\cite{JuuLinMan2, JuuLin, ChaDep3}. 
In these papers the authors proved that if $(\lambda_p)_{N<p<\infty} $ is a
generalized sequence of eigenvalues of the $p$-Laplacian such that 
$\lim_{p \to \infty}\lambda_p ^{1/p}=\Lambda$ and $u_p$ are
corresponding eigenfunctions such that $\|u_p\|_p \leq C$ and $u_p \to
u$ uniformly, then $u$ is a viscosity solution of 
\begin{equation}\label{inftyeigen}
\left\{ \begin{array}{ll}
\min\{|\nabla u| - \Lambda u \, , \, -\Delta_\infty u \}=0 & \mbox{in}
\,\,\, \{u>0\},\\
-\Delta_\infty u =0 & \mbox{in} \,\,\, \{u=0\},\\
\max\{-|\nabla u| - \Lambda  u \, , \, -\Delta_\infty u \}=0 &
\mbox{in} \,\,\, \{u<0\},\\
\end{array} \right.
\end{equation}
where the infinite Laplacian of $u$ is given by
$\displaystyle \Delta_\infty u = \sum_{i,j} u_{x_i \,x_j}u_{x_i}u_{x_j}$.
According to the definition given in \cite{JuuLin} this means that $u$ is
an eigenfunction of the $\infty$-Laplacian for the $\infty-$eigenvalue $\Lambda$.

The aim of this paper is to introduce a different asymptotic problem
as $p \to \infty$ of the first eigenvalue problem  which relates the problem to an optimal
transportation problem, to start an analysis of the limiting problem as well as
propose some related questions and a
few answers. The idea that a transport equation appears in the limit as
$p \to \infty$ goes back to \cite{BatDibMan}. The explicit connection
of this limit with the optimal transportation problem was first
exploited in \cite{EvGa} and in the setting of the eigenvalues
problems appeared also in \cite{GarManPerRos}.

The main reason to focus our study on the first eigenvalue is that
the restriction $u_{\lambda,V}$ of an eigenfunction $u_\lambda$
(for some eigenvalue $\lambda$ of the $p$-Laplacian operator) to one of its
nodal domains $V$ is indeed an eigenfunction for the first eigenvalue of
the corresponding $p$-Laplacian operator for this domain $V$.
A close study on the first eigenvalue (and related eigenfunctions)
of the $p$-Laplacian operator is then of great help to understand
the properties of the eigenfunctions of higher eigenvalues.
This was in particular illustrated in \cite{JuuLin}.

The paper is organized as follows. Section \S 2 
is devoted to review basic notions and results concerning the eigenvalues
of the $p$-Laplacian. In section \S 3 
we propose a new asymptotic analysis as $p$ goes to $\infty$, and make the
link with an optimal transport problem in section \S 4. 
In the final section \S 5 
we show how the proposed asymptotic analysis may be applied to obtain
some informations on the limits obtained. 


\CAAnewsection{2. DEFINITIONS AND PRELIMINARY RESULTS}\label{notations}


\CAAnewsubsection{Nonlinear eigenvalues of the $p$-Laplacian}\label{nonlindefi}\hfill

\noindent We shall denote by $\|\cdot\|_p$ the usual norm of $L^p(\om)$ (or
$L^p(\om;\R^N)$ when dealing with the gradient of some element of $W_0^{1,p}(\om)$).

An eigenvalue of the $p-$Laplacian operator $-\Delta_p$ is a real number $\lambda$
for which the problem
\begin{equation}\label{Lp}
(\mbox{P}_p^\lambda)\quad \quad \quad \left\{\begin{array}{ll}
-\Delta_p u := -div (|\nabla u|^{p-2}\nabla u) = \lambda |u|^{p-2}u & \mbox{in} \ \Omega, \\
u=0 & \mbox{on}\ \partial \Omega,
\end{array}\right.
\end{equation}
has a non-zero solution in $W^{1,p}_0(\Omega)$.
This problem (and its generalizations to monotone elliptic operators) 
has been widely studied in the literature and for more
detailed treatment we refer to \cite{AppDepVig,Bro,Cof,GarPer1,GarPer2,JuuLin,Lin}.
Much is still unknown about the eigenvalues of the $p-$Laplacian
operator. A good understanding of the set of the eigenvalues would
permit some progress on more general nonlinear equations involving the
$p$-Laplacian (e.g. a good definition of jumping nonlinearity) as well
as some progress on parabolic equations involving the $p$-Laplacian. Let us report
some classical results. It is known
that $\lambda$ is an eigenvalue if and only
if it is a critical value for the Rayleigh quotient 
$$ v \mapsto \frac{\int_\Omega |\nabla v|^pdx}{\int_\Omega |v|^pdx}
\,\,\, \left(= \frac{\| \nabla v \|_p^p}{\| v \|_p^p} \right)$$
which is a Gateaux differentiable functional on $W^{1,p}_0(\Omega)$ outside the origin. 
Moreover, a sequence $(\lambda_p^k)_{k \geq 1}$ of eigenvalues can be obtained as
follows (we refer to \cite{GarPer1} and \cite{Lin} for details).
Denote by $\Sigma_p^k(\om) $ the set of those subsets $G$ of $W^{1,p}_0 (\Omega)$
which are symmetric (i.e. $G=-G$), contained in the set $\{ v \,:\, \|v\|_p=1\}$, strongly compact
in $W^{1,p}_0(\Omega)$ and with Krasnoselskii genus $\gamma(G)\geq k$
(we refer to \cite{struwe} for more details on the Krasnoselskii genus), and set
\begin{equation*}
\lambda _p ^k \,\,=\,\, \inf_{G \in \Sigma_k^p(\om)} \, \, \sup _{u \in G}
\|\nabla u\|_p ^p.
\end{equation*}
Then each $\lambda _p ^k$ defined as above is an eigenvalue of the $p$-Laplacian operator
and $\lambda _p ^k \to +\infty$ as $k \to \infty$.
Moreover $\lambda_p^1$ is the smallest eigenvalue of $-\Delta_p$,
it is simple (see \cite{BelKaw1} for a short proof) and the operator
$-\Delta_p$ doesn't have any eigenvalue between $\lambda_p^1$ and $\lambda_p^2$.

A second sequence $(\mu_p^k)_k$ of eigenvalues was introduced in Theorem 5
of  \cite{DraRob}. This sequence is also obtained by a $\inf -\sup$
operation but in this case the inf operation is performed on a smaller class
of sets than $\Sigma^k_p$ (we refer the reader to \cite{DraRob} for more details).
It is only known that $\lambda_p^1=\mu_p^1$ and $\lambda_p^2
=\mu_p^2$.  Some interesting questions related to our analysis are the following:
does it hold  $\lambda_p^k=\mu_p^k$ for all $p$ and $k$?
Is it true that $\{\lambda _p^k \}_{k \geq 1}$ is the entire set of eigenvalues?

The relevance of these questions may be also understood in the light of
a theorem of Fredholm alternative for the $p$-Laplacian which appear
in \cite{AppDepVig} (namely theorem 12.12 therein).

Finally let us report a basic estimate for the first eigenvalue which
is a consequence of the following characterization:
\begin{equation}\label{min} 
\lambda_p^1 = \min \left\{\int_\Omega |\nabla u|^p dx \ | \ u \in
W_0^{1,p}(\Omega), \ \|u\|_p=1 \right\}.
\end{equation}
Denote by 
$$ R_1= \sup \{r | \ \exists x_0 \ \mbox{s.t.} \ B(x_0,r) \subset
\Omega\},$$
the radius of the biggest ball inscribed in $\Omega$ then 
 \begin{lemma} For each $p \in [1, \infty),$  we have
   $(\lambda_p^1)^{1/p} \leq \frac{1}{R_1}$ and then 
$$ \limsup_{p \to \infty} (\lambda_p^1)^{1/p} \leq \frac{1}{R_1}.$$
\end{lemma}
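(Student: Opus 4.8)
The plan is to bound $\lambda_p^1$ from above by feeding the minimization \eqref{min} an explicit, cheap competitor and then optimizing over it. The natural choice is a centred, truncated distance function supported in a ball that almost realizes the inradius: I would fix $r<R_1$ and $x_0\in\Omega$ with $\overline{B(x_0,r)}\subset\Omega$ (possible by the very definition of $R_1$), and set $\varphi(x):=\bigl(r-|x-x_0|\bigr)^{+}$. This $\varphi$ is Lipschitz, compactly supported in $\overline{B(x_0,r)}$ and vanishes on $\partial B(x_0,r)$, hence is an admissible element of $W_0^{1,p}(\Omega)$, and $|\nabla\varphi|=1$ a.e.\ on $B(x_0,r)$ while $\nabla\varphi=0$ elsewhere. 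With this choice the whole argument collapses to one elementary one--variable integral together with the asymptotics of a binomial coefficient.

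Carrying this out: since $|\nabla\varphi|\equiv1$ on the ball, $\int_\Omega|\nabla\varphi|^p\,dx=|B(x_0,r)|=\omega_N r^N$ (with $\omega_N$ the volume of the unit ball), while passing to polar coordinates about $x_0$,
\[
\int_\Omega|\varphi|^p\,dx=N\omega_N\int_0^r(r-\rho)^p\rho^{N-1}\,d\rho
=N\omega_N\,r^{N+p}\,\frac{\Gamma(N)\,\Gamma(p+1)}{\Gamma(N+p+1)}
=\frac{\omega_N\,r^{N+p}}{\binom{N+p}{N}},
\]
the middle equality being the Beta integral and $\binom{N+p}{N}:=\Gamma(N+p+1)/\bigl(\Gamma(N+1)\Gamma(p+1)\bigr)$. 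Inserting $\varphi/\|\varphi\|_p$ in \eqref{min} thus gives $\lambda_p^1\le\binom{N+p}{N}r^{-p}$, i.e.\ $(\lambda_p^1)^{1/p}\le r^{-1}\binom{N+p}{N}^{1/p}$, and letting $r\uparrow R_1$ yields
\[
(\lambda_p^1)^{1/p}\ \le\ \frac{1}{R_1}\,\binom{N+p}{N}^{1/p}\qquad\text{for every finite }p .
\]

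To conclude, I would note that $\binom{N+p}{N}=\frac{(p+1)(p+2)\cdots(p+N)}{N!}$ grows only polynomially in $p$, so $\binom{N+p}{N}^{1/p}\to1$ as $p\to\infty$ (a one--line estimate from $\log\binom{N+p}{N}=O(\log p)$); together with the last display this gives $\limsup_{p\to\infty}(\lambda_p^1)^{1/p}\le 1/R_1$, and the factor $\binom{N+p}{N}^{1/p}$ collapses to $1$ in the limit. I do not anticipate any real obstacle in this scheme: the only points demanding (routine) care are the admissibility of the Lipschitz competitor $\varphi$ together with the identity $|\nabla\varphi|=1$ a.e.\ on $B(x_0,r)$, the polar--coordinate evaluation of $\int_\Omega|\varphi|^p$, and the harmless limit $\binom{N+p}{N}^{1/p}\to1$. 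One may equivalently test \eqref{min} with $d(\cdot,\partial\Omega)$ on all of $\Omega$, but then the integral $\int_\Omega d^p$ must be handled by a Laplace--type asymptotic, whereas on the inscribed ball the computation is exact.
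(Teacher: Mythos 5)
Your proposal is essentially the paper's own argument: the paper also tests the minimality in (\ref{min}) against the normalized cone $v(x)=\max\{R_1-|x-\overline{x}|,0\}$ supported in an inscribed ball, and your variant with radius $r<R_1$ followed by $r\uparrow R_1$ is the same computation, carried out explicitly via the Beta integral; it correctly yields $\limsup_{p\to\infty}(\lambda_p^1)^{1/p}\le 1/R_1$, which is the only part of the Lemma used in the sequel (to identify $\Lambda_\infty=1/R_1$). Be aware, though, that what you actually obtain is $(\lambda_p^1)^{1/p}\le \binom{N+p}{N}^{1/p}/R_1$, not the first assertion of the Lemma, namely the exact inequality $(\lambda_p^1)^{1/p}\le 1/R_1$ for each finite $p$. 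This is not a defect of your execution: the Rayleigh quotient of the cone equals exactly $\binom{N+p}{N}R_1^{-p}>R_1^{-p}$, so the paper's one-line proof does not give the pointwise-in-$p$ claim either, and that claim is in fact false as stated (for $\Omega=(-R,R)\subset\R$, $N=1$, $p=2$ one has $(\lambda_2^1)^{1/2}=\pi/(2R)>1/R=1/R_1$). So your argument proves precisely what this test function can prove, the $\limsup$ estimate, and in making the correction factor $\binom{N+p}{N}^{1/p}\to 1$ explicit it is more careful than the paper's own proof; if you wanted the statement to be literally correct for finite $p$ you would have to restate it with such a factor (or as the bound $\lambda_p^1\le |\Omega|/\int_\Omega d_\Omega^p\,dx$ obtained from the distance function), not prove it as written.
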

\begin{proof} Let $B(\overline{x}, R_1)$ be a ball inscribed in
  $\Omega$, then $v(x):= \max \{R_1-|x-\overline{x}|, 0\}$ belongs to
  $ W_0^{1,p}(\Omega)$ and it is enough to test the minimality in
  (\ref{min}) against $v / \|v\|_p $ to obtain the desired estimate.
\end{proof}

As the main focus of the paper will be on the generalized sequence of
the first eigenvalue we will simplify the notations and write $\lambda_p$
for $\lambda_p^1$.
Up to subsequences we may then assume that $(\lambda _p)^{1/p} \to \Lambda_\infty$
and we will in fact prove that $\Lambda_\infty =\frac{1}{R_1}.$ This has already been
proved in \cite{JuuLinMan} and then in \cite{JuuLin,ChaDep3}. Here we
deduce this equality from a minimality property of $u_p$ and from the
Monge-Kantorovich (or optimal transportation) problem 
obtained in the limit as $p \to \infty$.


\CAAnewsubsection{$\Gamma$-convergence}\hfill

\noindent A crucial tool in the analysis of this paper will be the following  concept of
$\Gamma$-convergence.
  
Let $X$ be a metric space, 
a sequence of functionals $F_n:X\to \overline \R$ is said to
$\Gamma$-converge to $F_\infty$ at $x$ if
\begin{equation}
F_\infty(x) \, = \, \Gamma-\liminf F_n(x) \, = \, \Gamma-\limsup F_n(x),
\end{equation}
where
\begin{equation}
\left\{
\begin{array}{ll}
\vspace{1mm}                               %
\Gamma-\liminf F_n(x) & \!\!\! = \, \inf \, \big \{\liminf F_n(x_n): \ \ x_n \to x \,\, in \,\, X \big \}, \\
\Gamma-\limsup F_n(x) & \!\!\! = \, \inf \, \big \{\limsup F_n(x_n): \ \ x_n \to x \,\, in \,\, X \big \}.
\end{array}\right.
\end{equation}
The $\Gamma-$convergence was introduced in \cite{DeGio}, 
for an introduction to this theory we refer to
\cite{dalmaso} and \cite{attouch}.  
We report a classical theorem which includes some
properties of the $\Gamma$-convergence
that we shall use in the following.
\begin{theorem}\label{propriegamma}
Assume that the sequence $(F_n)_{n \in \N}$ of functionals
$\Gamma$-converges to $F_\infty$ on $X$. 
Assume in addition that the sequence $(F_n)_n$ 
is equi-coercive on $X$. Then
$$
\displaystyle \lim_{n \to +\infty} \left( \inf_{x \in X} F_n(x) \right)
\, = \, \inf_{x \in X} F_\infty(x)
$$ and one has
$
\displaystyle F_\infty(x_\infty) = \inf_{x \in X} F_\infty(x)
$
for any cluster point $x_\infty$ of a sequence $(x_n)_{n \in \N}$
such that
$$
\forall n \in \N \quad \quad F_n(x_n) \leq \inf_{x \in X} F_n(x) + \varepsilon _n
$$
with $\varepsilon_n\to 0$ as $n \to \infty$.
\end{theorem}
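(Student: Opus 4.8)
The statement is the classical ``fundamental theorem'' of $\Gamma$-convergence, and the plan is to follow the standard route: establish the two one-sided inequalities for the infima separately, and then read off the behaviour of the almost-minimizers from the $\Gamma$-$\liminf$ inequality. Throughout I use that equi-coercivity of $(F_n)$ provides, for each $t\in\R$, a single compact set $K_t\subset X$ with $\{F_n\le t\}\subset K_t$ for all $n$ (equivalently, there is a lower semicontinuous coercive $\Psi$ with $F_n\ge\Psi$, so that $\inf_X F_n\ge\inf_X\Psi>-\infty$ and all the infima below lie in $(-\infty,+\infty]$). Write $m_n:=\inf_X F_n$ and $m_\infty:=\inf_X F_\infty$.

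The first step, $\limsup_n m_n\le m_\infty$, uses only the $\Gamma$-$\limsup$ half of the hypothesis and no compactness: for any $x\in X$ with $F_\infty(x)<+\infty$ one takes a recovery sequence $x_n\to x$ with $\limsup_n F_n(x_n)\le F_\infty(x)$, and since $m_n\le F_n(x_n)$ this gives $\limsup_n m_n\le F_\infty(x)$; taking the infimum over $x$ yields the inequality (the case $m_\infty=+\infty$ being vacuous).

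The second step, $\liminf_n m_n\ge m_\infty$, is where equi-coercivity is essential. Set $\ell:=\liminf_n m_n$ and pass to a subsequence along which $m_n\to\ell$; if $\ell=+\infty$ there is nothing to prove, so assume $\ell<+\infty$. Choose along this subsequence points $x_n$ with $F_n(x_n)\le m_n+\frac1n$, so that $F_n(x_n)\to\ell$ and hence $x_n\in K_{\ell+1}$ for $n$ large. Extract a convergent sub-subsequence $x_{n_k}\to x_\infty\in X$; the $\Gamma$-$\liminf$ inequality at $x_\infty$ then gives $F_\infty(x_\infty)\le\liminf_k F_{n_k}(x_{n_k})=\ell$, whence $m_\infty\le\ell$. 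Combining the two steps gives $\lim_n m_n=m_\infty$.

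Finally, given an almost-minimizing sequence $x_n$ with $F_n(x_n)\le m_n+\varepsilon_n$, $\varepsilon_n\to0$, and a cluster point $x_{n_k}\to x_\infty$, the convergence just proved yields $F_{n_k}(x_{n_k})\le m_{n_k}+\varepsilon_{n_k}\to m_\infty$, so the $\Gamma$-$\liminf$ inequality at $x_\infty$ gives $F_\infty(x_\infty)\le\liminf_k F_{n_k}(x_{n_k})\le m_\infty$; since $F_\infty(x_\infty)\ge m_\infty$ trivially, $x_\infty$ minimizes $F_\infty$. The only genuinely delicate point in the whole argument is the compactness extraction in the second step — it is precisely there that the equi-coercivity hypothesis is used — together with a little care over the bookkeeping of the nested subsequences and over the degenerate cases in which an infimum equals $+\infty$ (or, were coercivity absent, $-\infty$). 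As this is a textbook fact, in the paper one may alternatively just invoke \cite{dalmaso} or \cite{attouch}.
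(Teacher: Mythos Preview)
Your argument is correct and is exactly the standard textbook proof. Note, however, that the paper does not give its own proof of this statement: Theorem~\ref{propriegamma} is merely \emph{reported} as a classical result, with the reader referred to \cite{dalmaso} and \cite{attouch}, which is precisely the alternative you mention in your last sentence.
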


\CAAnewsection{3. THE ASYMPTOTIC BEHAVIOR AS $p \to \infty$.}\label{asymptsect}

Recall that, for any $p>N$, $\lambda_p$ stands for the first eigenvalue of the $p$-Laplace
operator. We shall denote by $u_p$ the unique corresponding eigenfunction which is positive
in $\om$ and such that
\begin{equation}\label{norm1}
\|u_p\|_p \,\,=\,\, \left(\int_\om u_p^p(x) dx \right)^{1/p} \,\, = \,\, 1.
\end{equation}
We also introduce the following measures:
\begin{equation}\label{defsigp}
\sigma_p:= \frac{|\nabla u_p|^{p-2}\nabla u_p}{\lambda_p}dx, \quad
f_p:=u_p^{p-1}dx,\quad
\mu_p:= \frac{|\nabla u_p|^{p-2}}{\lambda_p}dx.
\end{equation}

\begin{lemma}\label{lemEST} The above measures satisfy the following inequalities for $p>2$:
\begin{equation*}
\begin{array}{c}
\displaystyle{\int_{\Omega}\left\vert{\nabla u_p\over \lambda_p^{1/p}}\right\vert^p\ dx=1,\quad
\int_{\Omega}d\vert f_p\vert \le \vert \Omega \vert^{1/p},}\\
\displaystyle{\int_{\Omega}d \vert \mu_p\vert\ \le\vert\Omega\vert^{2/p},\quad
\int_{\Omega}d\vert \sigma_p\vert\le\vert\Omega\vert^{1/p}.}
\end{array}
\end{equation*}
Then there exists $u_{\infty}\in {\rm Lip}(\Omega) \cap \C_0(\om)$ with $\|u_\infty\|_\infty =1$, 
$f_\infty\in \M_b^+(\overline{\Omega})$ a probability measure,
$\mu_\infty\in \M_b^+(\overline{\Omega})$ and $\xi_\infty\in L^1_{\mu_\infty}(\Omega)^d$ such that,
up to subsequences:
\begin{equation*}
\begin{array}{c}
\displaystyle
u_p\to u_{\infty}\mbox{ uniformly on $\barom$,}\quad
f_p\ws f_\infty\ \mbox{in $\M_b(\overline{\Omega})$},\\
\displaystyle{\mu_p\ws\mu_\infty \mbox{ in
    $\M_b^+(\overline{\Omega})$},\quad
  \sigma_p\ws\sigma_{\infty}:=\xi_\infty\mu_\infty \mbox{ in
    $\M_b(\overline{\Omega},\R^N)$}.}
\end{array}
\end{equation*}
\end{lemma}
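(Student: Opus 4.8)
The plan is to treat the statement in three parts: (i) the four integral estimates, (ii) the extraction of $u_\infty$, $f_\infty$, $\mu_\infty$, $\sigma_\infty$ by compactness, and (iii) the structure relation $\sigma_\infty=\xi_\infty\mu_\infty$; only (iii) calls for an idea, the rest being bookkeeping. Throughout I will use that $(\lambda_p^{1/p})_{p>N}$ is bounded above by Lemma~2.1 and bounded away from $0$ by the elementary Poincar\'e/Morrey lower bound (e.g.\ $1=\|u_p\|_p\le|\om|^{1/p}\,\mathrm{diam}(\om)^{1-1/p}\|\nabla u_p\|_p$), so $\Lambda_\infty\in(0,+\infty)$. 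For (i): testing $-\Delta_p u_p=\lambda_p u_p^{p-1}$ against $u_p$ gives $\int_\om|\nabla u_p|^p\,dx=\lambda_p\int_\om u_p^p\,dx=\lambda_p$, which is the first identity. The three total-variation bounds follow from H\"older's inequality applied to $u_p^{p-1}\!\cdot\!1$, $|\nabla u_p|^{p-2}\!\cdot\!1$ and $|\nabla u_p|^{p-1}\!\cdot\!1$ (conjugate exponents $(\tfrac{p}{p-1},p)$, $(\tfrac{p}{p-2},\tfrac{p}{2})$, $(\tfrac{p}{p-1},p)$), together with $\|u_p\|_p=1$ and $\|\nabla u_p\|_p=\lambda_p^{1/p}$; for $\mu_p$ and $\sigma_p$ one picks up a factor $\lambda_p^{-2/p}$, resp.\ $\lambda_p^{-1/p}$, which is harmless since $\lambda_p^{1/p}\to\Lambda_\infty>0$.

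For the convergence of $u_p$: fix $m>N$. By H\"older, $\|\nabla u_p\|_{L^m}\le|\om|^{1/m-1/p}\lambda_p^{1/p}$ is bounded uniformly in $p>m$, so $\{u_p:p>m\}$ is bounded in $W^{1,m}_0(\om)$, which embeds compactly into $\C^{0,1-N/m}(\barom)$. A diagonal argument over $m=N+1,N+2,\dots$ produces a subsequence along which $u_p\to u_\infty$ uniformly on $\barom$, with $u_\infty\in W^{1,m}_0(\om)$ and $\|\nabla u_\infty\|_{L^m}\le\liminf_p\|\nabla u_p\|_{L^m}\le\Lambda_\infty|\om|^{1/m}$ for every $m$; letting $m\to\infty$ gives $\|\nabla u_\infty\|_\infty\le\Lambda_\infty$, hence $u_\infty\in\Lip(\om)$, and $u_\infty\in\C_0(\om)$ because each $u_p$ vanishes on $\Dom$. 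For the normalization: $1=\|u_p\|_p\le|\om|^{1/p}\|u_p\|_\infty$ yields $\|u_\infty\|_\infty=\lim_p\|u_p\|_\infty\ge\lim_p|\om|^{-1/p}=1$, while for fixed $q$ one has $\|u_p\|_q\le|\om|^{1/q-1/p}$ for $p>q$, so $\|u_\infty\|_q\le|\om|^{1/q}$ in the limit $p\to\infty$, and then $\|u_\infty\|_\infty\le1$ as $q\to\infty$; thus $\|u_\infty\|_\infty=1$.

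For the three measures: their total variations are uniformly bounded by (i), and since $\M_b(\barom)$ and $\M_b(\barom;\R^N)$ are the duals of the separable spaces $\C(\barom)$, $\C(\barom;\R^N)$, the (sequential) Banach--Alaoglu theorem gives a further subsequence along which $f_p\ws f_\infty$, $\mu_p\ws\mu_\infty$ and $\sigma_p\ws\sigma_\infty$; nonnegativity of $f_p$ and $\mu_p$ passes to the limit, so $f_\infty,\mu_\infty\in\M_b^+(\barom)$. Testing $f_p\ws f_\infty$ against the constant $1$ gives $f_\infty(\barom)=\lim_p\int_\om u_p^{p-1}\,dx$; combining $\int_\om u_p^{p-1}\,dx\le|\om|^{1/p}\to1$ with $1=\int_\om u_p^p\,dx\le\|u_p\|_\infty\int_\om u_p^{p-1}\,dx$ and $\|u_p\|_\infty\to1$ forces $\lim_p\int_\om u_p^{p-1}\,dx=1$, so $f_\infty\ge0$ has mass $1$, i.e.\ it is a probability measure.

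Finally, the identification $\sigma_\infty=\xi_\infty\mu_\infty$, which I expect to be the one genuinely non-mechanical step. The key is the weighted identity $\int_\om|\nabla u_p|^2\,d\mu_p=\lambda_p^{-1}\int_\om|\nabla u_p|^p\,dx=1$. Writing $\sigma_p=|\nabla u_p|^{p-2}\nabla u_p\,\lambda_p^{-1}\,dx$, for every $\phi\in\C(\barom;\R^N)$ the Cauchy--Schwarz inequality with respect to $\mu_p$ gives
$$\Big|\int\phi\cdot d\sigma_p\Big|\le\int|\phi|\,|\nabla u_p|\,d\mu_p\le\Big(\int|\nabla u_p|^2\,d\mu_p\Big)^{1/2}\Big(\int|\phi|^2\,d\mu_p\Big)^{1/2}=\|\phi\|_{L^2(\mu_p)}.$$
Passing to the limit (using $|\phi|^2\in\C(\barom)$ and the weak-$*$ convergences) yields $\big|\int_{\barom}\phi\cdot d\sigma_\infty\big|\le\|\phi\|_{L^2(\mu_\infty)}$ for all such $\phi$. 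Hence the linear functional $\phi\mapsto\int\phi\cdot d\sigma_\infty$ is continuous for the $L^2(\mu_\infty)$-norm on the dense subspace $\C(\barom;\R^N)\subset L^2(\mu_\infty;\R^N)$, so it extends to that Hilbert space and the Riesz representation theorem provides $\xi_\infty\in L^2(\mu_\infty;\R^N)\subset L^1_{\mu_\infty}(\om)^d$ (as $\mu_\infty$ is finite) with $\int\phi\cdot d\sigma_\infty=\int\phi\cdot\xi_\infty\,d\mu_\infty$ for every $\phi$, that is $\sigma_\infty=\xi_\infty\mu_\infty$. The main obstacle is precisely recognizing that $\int|\nabla u_p|^2\,d\mu_p\equiv1$ is what closes this estimate and forces $|\sigma_\infty|\ll\mu_\infty$; everything else is a routine blend of H\"older's inequality, the compact Morrey embedding, a diagonal argument and Banach--Alaoglu.
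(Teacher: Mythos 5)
Your proof is correct, and for the estimates and the compactness extraction it follows essentially the same route as the paper: test the equation with $u_p$ to get $\int_\om|\nabla u_p|^p\,dx=\lambda_p$, apply H\"older for the total-variation bounds, use the uniform $W^{1,q}_0$-bounds ($q>N$ fixed) plus Morrey to get uniform convergence of $u_p$, weak-$*$ compactness for the measures, and the squeeze $\limsup_p\int u_p^{p-1}\le 1\le\liminf_p\int u_p^{p-1}$ (the paper phrases this as $1=\int u_\infty\,df_\infty\le\|u_\infty\|_\infty f_\infty(\clomega)\le1$, which is the same mechanism) to see that $f_\infty$ is a probability and $\|u_\infty\|_\infty=1$. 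The one place where you genuinely diverge is the structure statement $\sigma_\infty=\xi_\infty\mu_\infty$: the paper simply invokes Lemma 3.1 of \cite{BBDP}, whereas you prove it directly, observing that $\int|\nabla u_p|^2\,d\mu_p=\lambda_p^{-1}\int|\nabla u_p|^p\,dx=1$, so Cauchy--Schwarz gives $|\int\phi\cdot d\sigma_p|\le\|\phi\|_{L^2(\mu_p)}$ for $\phi\in\C(\barom;\R^N)$, which passes to the limit and yields, by density of $\C(\barom;\R^N)$ in $L^2_{\mu_\infty}$ and Riesz representation, a density $\xi_\infty\in L^2_{\mu_\infty}\subset L^1_{\mu_\infty}$. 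This is precisely the content of the cited lemma in the case at hand, so your argument buys a self-contained proof (and the slightly stronger information $\xi_\infty\in L^2_{\mu_\infty}$) at the cost of a few lines; it is a welcome addition rather than a gap. Two cosmetic remarks: your H\"older computation actually produces the bounds $\lambda_p^{-1/p}|\om|^{1/p}$ and $\lambda_p^{-2/p}|\om|^{2/p}$ for $\sigma_p$ and $\mu_p$ (the paper's stated constants silently drop these factors too), so your explicit appeal to $\liminf_p\lambda_p^{1/p}>0$ via a Poincar\'e-type inequality is the right way to justify uniform boundedness --- just note that the Morrey/Poincar\'e exponent should be $1-N/p$ with a dimensional constant rather than $1-1/p$; this does not affect the conclusion $\lambda_p^{1/p}\ge c>0$.
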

\begin{proof}
The second bound is an easy consequence of H\"older's inequality and of the assumption
$\int\vert u_p\vert^{p}\ dx=1$.
To obtain the remaining estimates, it is sufficient to show the first equality
and then apply H\"older's inequality.
As $u_p$ solves (\ref{Lp}), by multiplying the PDE (\ref{Lp}) by $u_p$ and integrating
by parts we get
$$
\int_\om \vert \nabla u_p\vert^p\ dx= \lambda_p \int_\om \vert u_p\vert^{p}\ dx=\lambda_p.
$$

By the above estimates, for any $N\le q<+\infty$, $(u_p)_{p> q}$ is
bounded in $W_0^{1,q}(\Omega)$, more precisely, using Holder's inequality, we get:
$$\int_{\Omega} \vert \nabla u_p(x)\vert^q\ dx
\le (\int_{\Omega} \vert \nabla u_p\vert^p dx)^{q\over p} \vert \Omega \vert^{1-q/p}
=(\lambda_p^{1/p})^q\vert\Omega\vert^{1-q/p}.$$
As a consequence, fixing $q>N$, we obtain that $(u_p)_{p>q}$ is precompact in ${\mathcal C}(\barom)$
and, up to subsequences, the uniform convergence to some $u_{\infty}$ holds.

Using again the estimates above,  we get (up to subsequences) the existence
of a weak* limit $f_\infty$ for $(f_p)_p$, $\sigma_\infty$ for
$(\sigma_p)_p$ and $\mu_\infty$ for $(\mu_p)_p$ in $\M_b(\overline{\Omega})$.
Note that, as we are
on a compact set, the convergence of $(f_p)_p$ is tight.
From this convergence  it comes that $|f_\infty(\clomega)| \leq 1$. To obtain the
reverse inequality we observe that for all $p$ one has $\int u_p d f_p=1$ so that in
the limit $\int u_\infty d f_\infty=1$. On the other hand it follows from the Holder
inequality applied with $1<q<p$ that
$$
\|u_p\|_q \, \leq \, \|u_p\|_p \, |\Omega |^{\frac{1}{q}-\frac{1}{p}}
\, = \, |\Omega |^{\frac{1}{q}-\frac{1}{p}}.
$$
Taking the limit as $p \to + \infty$ and then as  $q \to + \infty$ yields $\|u_p\|_\infty \leq 1$.
Therefore
$$
1 \, = \, \int u_\infty d f_\infty
\, \leq \, \|u_\infty\|_\infty \, |f_\infty(\clomega)| \, \leq \, 1
$$
so that $f_\infty$ is a probability measure on $\clomega$.
Moreover, thanks to lemma 3.1 of
\cite{BBDP}, we can write $\sigma_{\infty}=\xi_\infty\mu_\infty$ for some
$\xi_\infty\in L^1_{\mu_\infty}(\Omega)^d$.\\
\end{proof}
We devote the rest of the paper to the properties of the limits
$u_\infty, \ f_\infty,\ \sigma_\infty,\ \mu_\infty$.

\CAAnewsubsection{A first $\Gamma$-convergence approach}\hfill

\noindent If we consider $f_p$ as known, we may introduce the following variational problem:
$$
(\MP_p)\quad \quad \quad \min_{u\in W_0^{1,p}(\Omega)}
\left\{ {1\over p\lambda_p}\int_{\Omega} \vert \nabla u(x)\vert^p\ dx-\langle f_p,u\rangle\right\}.
$$
By the definitions of $u_p$ and $f_p$, it follows that $u_p$ is the unique minimizer
of $(\MP_p)$. Moreover, since the solution set of the problem $(\mbox{P}_p^{\lambda_p})$
is spanned by $u_p$, we may consider $(\MP_p)$ as a variational formulation of (\ref{Lp}) for
$\lambda=\lambda_p$.
Then we have:
\begin{proposition}\label{thm-ptoinfty}
The sequence $(\min (\MP_p))_p$ converges to the minimum of the following optimization problem:
$$
(\MP_\infty)\quad \min\{-<f_\infty,u>:\ u\in \Lip(\Omega),\ \vert \nabla u\vert\le \Lambda_{\infty}\ \mbox{a.e.},\
u=0\ \mbox{on }\partial\Omega\},
$$
and $u_{\infty}$ minimizes $(\MP_\infty)$.
\end{proposition}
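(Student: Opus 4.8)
The plan is to read $(\MP_p)$ and $(\MP_\infty)$ as the minimization of two functionals on the metric space $X:=\C_0(\barom)$ (continuous functions on $\barom$ that vanish on $\partial\Omega$, with the uniform norm), and to prove that the first family $\Gamma$-converges to the second and is equi-coercive; the statement then follows from Theorem~\ref{propriegamma} together with the uniform convergence $u_p\to u_\infty$ furnished by Lemma~\ref{lemEST}. Set $F_p(u):=\frac1{p\lambda_p}\int_\Omega|\nabla u|^p\,dx-\langle f_p,u\rangle$ if $u\in W_0^{1,p}(\Omega)$ and $F_p(u):=+\infty$ otherwise, and $F_\infty(u):=-\langle f_\infty,u\rangle$ if $u$ is $\Lambda_\infty$-Lipschitz with $u=0$ on $\partial\Omega$, and $F_\infty(u):=+\infty$ otherwise. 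Since $p>N$, Morrey's embedding gives $W_0^{1,p}(\Omega)\subset\C_0(\barom)$, so $F_p$ is genuinely a functional on $X$ whose unique minimizer is $u_p$ (as recalled before the statement), and $\min_X F_p=\min(\MP_p)$; likewise $\inf_X F_\infty=\min(\MP_\infty)$, the infimum being attained (at $u_\infty$) as part of the conclusion.

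The core estimate involves the ratio $r_p:=\|\nabla u_p\|_p/\lambda_p^{1/p}$, in terms of which the gradient term of $F_p(u_p)$ equals $r_p^{\,p}/p$. Using $\lambda_p^{1/p}\le 1/R_1$, the bound $\int d|f_p|\le|\Omega|^{1/p}$ of Lemma~\ref{lemEST}, the interpolation $\|\nabla u_p\|_{N+1}\le\|\nabla u_p\|_p\,|\Omega|^{\frac1{N+1}-\frac1p}$ and Morrey's inequality on $W_0^{1,N+1}(\Omega)$, one obtains $|\langle f_p,u_p\rangle|\le C\,r_p$ with $C$ independent of $p$. Hence any sequence with $\sup_p F_p(u_p)<+\infty$ satisfies $r_p^{\,p}/p\le C'+C\,r_p$, which forces $\sup_p r_p<+\infty$; then $(u_p)$ is bounded in $W_0^{1,N+1}(\Omega)$, hence precompact in $X$, proving equi-coercivity. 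For the $\Gamma$-$\liminf$ inequality, take any $u_p\to u$ in $X$ with $\liminf_p F_p(u_p)<+\infty$; along a subsequence $F_p(u_p)\le c$, and since the gradient term is nonnegative, $F_p(u_p)\ge-\langle f_p,u_p\rangle\to-\langle f_\infty,u\rangle$ because $f_p\ws f_\infty$ and $\|u_p-u\|_\infty\to0$. Moreover $r_p^{\,p}\le C''p$, so for each fixed $q$, $\limsup_p\|\nabla u_p\|_q\le\limsup_p r_p\,\lambda_p^{1/p}\,|\Omega|^{\frac1q-\frac1p}=\Lambda_\infty|\Omega|^{1/q}$ (using $(C''p)^{1/p}\to1$ and $\lambda_p^{1/p}\to\Lambda_\infty$); passing to the weak limit in $W_0^{1,q}(\Omega)$ and then letting $q\to+\infty$ gives $\|\nabla u\|_\infty\le\Lambda_\infty$, while $u=0$ on $\partial\Omega$ since $u$ is a uniform limit of $W_0^{1,p}$-functions. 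Thus $u$ is admissible and $\liminf_p F_p(u_p)\ge-\langle f_\infty,u\rangle=F_\infty(u)$.

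For the $\Gamma$-$\limsup$ inequality one needs, for each admissible $u$ (there is nothing to prove when $F_\infty(u)=+\infty$), a recovery sequence. Note $u\in W^{1,\infty}(\Omega)\subset W_0^{1,p}(\Omega)$; if $\nabla u\equiv0$ take $u_p:=u$, otherwise set $t_p:=\min\{1,\ \lambda_p^{1/p}/\|\nabla u\|_p\}$ and $u_p:=t_p u$. Since $\|\nabla u\|_p\to\|\nabla u\|_\infty\le\Lambda_\infty$ and $\lambda_p^{1/p}\to\Lambda_\infty$, we have $t_p\to1$, so $u_p\to u$ uniformly; by construction $t_p\|\nabla u\|_p\le\lambda_p^{1/p}$, hence the gradient term of $F_p(u_p)$ is $\le1/p$, while $-\langle f_p,u_p\rangle=-t_p\langle f_p,u\rangle\to-\langle f_\infty,u\rangle$, so $\limsup_p F_p(u_p)\le F_\infty(u)$. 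Therefore $(F_p)$ $\Gamma$-converges to $F_\infty$ on $X$, and Theorem~\ref{propriegamma} yields $\min(\MP_p)=\min_X F_p\to\inf_X F_\infty$ together with the fact that the cluster point $u_\infty$ of the minimizers $u_p$ (Lemma~\ref{lemEST}) minimizes $F_\infty$; in particular $\inf_X F_\infty=\min(\MP_\infty)$, and $u_\infty$ solves $(\MP_\infty)$. (One could even bypass equi-coercivity: the recovery sequence at $u_\infty$ gives $\limsup_p\min_X F_p\le F_\infty(u_\infty)$, and the $\liminf$ inequality applied to the sequence $(u_p)$ itself gives $\liminf_p\min_X F_p=\liminf_p F_p(u_p)\ge F_\infty(u_\infty)$.)

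The delicate step is the $\liminf$ inequality: one controls $(\|\nabla u_p\|_p/\lambda_p^{1/p})^p$ only up to a factor of order $p$, so extracting the sharp bound $|\nabla u|\le\Lambda_\infty$ on the limit relies on $(C''p)^{1/p}\to1$ and on taking the limits in the correct order ($p\to\infty$ first, for each fixed $q$, then $q\to\infty$); the same $\frac1p$ prefactor forces the rescaling $t_p$ in the recovery sequence, since $\lambda_p^{1/p}$ may approach $\Lambda_\infty$ from below (indeed $\lambda_p^{1/p}\le1/R_1$). As a consistency check, evaluating $F_p$ at $u_p$ and using $\int_\Omega|\nabla u_p|^p\,dx=\lambda_p$ and $\langle f_p,u_p\rangle=\int_\Omega u_p^p\,dx=1$ gives $\min(\MP_p)=\frac1p-1\to-1$; correspondingly $\langle f_\infty,u_\infty\rangle=1$, and since $f_\infty$ is a probability measure on $\barom$ while every admissible $u$ satisfies $|u|\le\Lambda_\infty\,\mathrm{dist}(\cdot,\partial\Omega)\le\Lambda_\infty R_1\le1$, one indeed has $\min(\MP_\infty)=-\langle f_\infty,u_\infty\rangle=-1$.
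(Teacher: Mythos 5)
Your proof is correct and follows essentially the same route as the paper: the same functionals $F_p$ and $F_\infty$ on $\C_0(\barom)$, the same $\Gamma$-$\liminf$ argument (fix $q$, control the $W^{1,q}$-norms through the rescaled $p$-energy, pass to the weak limit, then let $q\to\infty$), and the same conclusion via Theorem \ref{propriegamma} combined with the uniform convergence of $u_p$ to $u_\infty$. The only deviations are refinements: you verify equi-coercivity explicitly (and observe it can be bypassed), and your rescaled recovery sequence $t_p u$ is in fact more careful than the paper's constant sequence, since it handles the possibility that $\lambda_p^{1/p}$ approaches $\Lambda_\infty$ from below; just phrase the inclusion you invoke as ``a Lipschitz $u$ with $u=0$ on $\partial\Omega$ belongs to $W_0^{1,p}(\Omega)$'' rather than $W^{1,\infty}(\Omega)\subset W_0^{1,p}(\Omega)$, which as written omits the boundary condition.
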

\begin{proof}
For $p>N$ let $F_p:\C_0(\om) \to \R \cup \{+\infty\}$ defined by
\begin{equation*}
F_p(u):=\left\{
\begin{array}{ll}
 \dsp{\frac{1}{p}\int _\Omega \left| \frac{\nabla u}{\lambda_p^{1/p}}\right|^pdx -
\langle f_p, u \rangle }&\ \mbox{ if } u\in W_0^{1,p}(\om),\\
+\infty, &\ \mbox{otherwise.}
\end{array}
\right.
\end{equation*}

We claim that the family $(F_p)_{p>N}$ $\Gamma$-converges in $\C_0(\om)$ to $F_\infty$ given by
$$
F_\infty(u):=\left\{
\begin{array}{ll}
-\langle f_\infty, u \rangle &\ \mbox{ if } u\in \Lip(\om)\ \mbox{and}\
\vert \nabla u\vert\le \Lambda_\infty\
\mbox{a.e. in $\om$}\\
+\infty, &\ \mbox{otherwise,}
\end{array}
\right.
$$
with respect to the norm of the uniform convergence.
We first show the $\Gamma-\liminf$ inequality, that is:
\begin{equation}\label{gliminf}
\Gamma-\liminf_{p \to +\infty} F_p \geq F_\infty
\end{equation}
Let $(v_p)_{p>N}$ converging uniformly to $v$, then we have:
\begin{equation}\label{easyPart}
\langle f_p,v_p\rangle\to \langle f_\infty,v\rangle.
\end{equation}
We shall prove that $\displaystyle \liminf_{p \to +\infty} F_p(v_p) \geq F_\infty(v)$.
We may assume that   $\displaystyle \liminf_{p \to +\infty} F_p(v_p)<+\infty$, that
is (thanks to (\ref{easyPart})):
$$
M \,\, := \,\,
\liminf_{p\to +\infty} \left(\frac{1}{p}\int_\Omega \left| \frac{\nabla v_p}{\lambda_p^{1/p}}\right|^pdx\right)<+\infty.
$$
It then remains to check that $v$ is Lipschitz continuous and satisfies
$\vert \nabla v\vert \leq \Lambda_\infty$ a.e. in $\om$.
Let $N<q<p$, then the $W^{1,q}$-norm of $({v_p\over \lambda_p^{1/p}})_p$ is bounded.
Indeed, as for $t>0$ the function
$s \mapsto {(t^s -1)\over s}$ is monotone increasing on $\, ]0,+\infty[\,$:
$$
\frac{1}{q}\int _\Omega \left| \frac{\nabla v_p}{\lambda_p^{1/p}}\right|^q dx\le
\frac{1}{p}\int _\Omega \left| \frac{\nabla v_p}{\lambda_p^{1/p}}\right|^pdx +(1/q-1/p)\vert \Omega\vert.
$$
Then, possibly extracting a subsequence we may assume
${v_p\over \lambda_p^{1/p}}\w {v\over \Lambda_\infty}$ in $W^{1,q}_0(\Omega)$ and
then:
$$
\left(\int _\Omega \left| \frac{\nabla v}{\Lambda_\infty}\right|^q dx\right)^{1/q}\le
\liminf_{p\to \infty} \left(\int _\Omega \left| \frac{\nabla v_p}{\lambda_p^{1/p}}\right|^q dx\right)^{1/q}
 \leq (q \,M - |\Omega|)^{1/q}.
$$
Letting $q$ go to $+\infty$ we get $|\nabla v |\leq \Lambda_\infty$ almost everywhere on $\om$.
This concludes the proof of (\ref{gliminf}). The $\Gamma-\limsup$ inequality,
i.e. $\Gamma-\limsup_{p \to +\infty} F_p(v) \leq F(v)$, follows by considering the constant sequence
$(v_p)_{p \geq 1} := (v)_{p \geq 1}$.

The Proposition now follows as a consequence of Theorem \ref{propriegamma} and 
of the uniform convergence of $(u_p)_p$ to $u_\infty$.
\end{proof}

We shall now see that the measure $\sigma_\infty$ plays its role in the classical
dual problem associated to $(\MP_\infty)$, as shown in Proposition \ref{min-sigma-inf}
below. We first identify the dual problem for $(\MP_\infty)$.

\begin{proposition}[Duality for the limit problem]\label{duality}
By convex duality we have:
\begin{multline}
\min(\MP_\infty)= -\min(\MP_\infty^*)
:=\\-\min_{\lambda\in{\mathcal P}(\partial \Omega)}
\min_{\sigma\in\M_b(\R^N)^N}\{\Lambda_{\infty}\int_{\R^N} \vert \sigma\vert:\
-{\rm div}(\sigma)=f_\infty-\lambda\ \mbox{in }\R^N \}.
\end{multline}
Moreover the minimum of $(\MP_\infty^*)$ can also be expressed as:
\begin{multline*}\min(\MP_\infty^*)
:= \min_{\sigma\in\M_b(\R^N)^N}
\{\Lambda_{\infty}\int_{\overline \Omega} \vert \sigma\vert:\  {\rm spt}(\sigma)\subset \overline{\Omega},\ 
-{\rm div}(\sigma)\in \M_b(\R^N)\ \mbox{and} \\ 
-{\rm div}(\sigma)=f_\infty\ \mbox{in }\Omega \}.
\end{multline*}

\end{proposition}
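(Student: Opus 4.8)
The plan is to derive the first equality from the Fenchel--Rockafellar duality theorem applied to $(\MP_\infty)$, rewritten as
$$-\min(\MP_\infty)\,=\,\sup\big\{\langle f_\infty,u\rangle-\Psi(\nabla u)\,:\,u\in\Lip(\om),\ u=0\ \mbox{on}\ \Dom\big\},$$
where $\Psi$ is the convex indicator of the ball $\{q:|q|\le\Lambda_\infty\ \mbox{a.e. in}\ \om\}$. I would first fix the functional-analytic frame: the primal variable $u$ ranges over a suitable completion of the $\C^1$ functions vanishing near $\Dom$, the operator $A:u\mapsto\nabla u$ takes values in a space of vector fields whose dual is $\M_b(\overline{\om};\R^N)$, and, after extending competitors by $0$ outside $\om$, all pairings are carried out over $\R^N$. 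In this frame one computes $\Psi^*(\sigma)=\Lambda_\infty\int d|\sigma|$ (the support function of a sup-norm ball is the total variation) and $A^*\sigma=-{\rm div}\,\sigma$ in $\mathcal D'(\R^N)$; since the admissible $u$ are only required to vanish on $\Dom$, the conjugate of the $u$-dependence is finite precisely when $-{\rm div}\,\sigma=f_\infty$ in $\om$, and then it vanishes.

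Next I would read off the boundary term. Setting $\lambda:=f_\infty+{\rm div}\,\sigma$, the constraint ``$-{\rm div}\,\sigma=f_\infty$ in $\om$'' says exactly that $\lambda$ is a bounded measure vanishing in $\om$ with $-{\rm div}\,\sigma=f_\infty-\lambda$ in $\R^N$; testing this identity against a cut-off function equal to $1$ on a neighbourhood of $\overline{\om}$ gives $\lambda(\R^N)=f_\infty(\overline{\om})=1$. A short argument then shows that an optimal $\lambda$ may be taken nonnegative and supported on $\Dom$: transporting mass no farther than the boundary never increases $\int d|\sigma|$, and, since $f_\infty\ge0$, allowing a signed boundary measure cannot lower the cost either; hence restricting to $\lambda\in\MP(\Dom)$ leaves the value unchanged. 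This identifies the Fenchel dual with $(\MP_\infty^*)$ in its first form. The equality $\min(\MP_\infty)=-\min(\MP_\infty^*)$ then follows once the qualification hypothesis is verified: $u\equiv0$ is admissible for the linear part and $\Psi$ is finite and continuous at $A0=0$, so Fenchel--Rockafellar applies and gives both the absence of a duality gap and attainment of the dual infimum, whence ``$\min$'' in place of ``$\inf$'' (primal attainment being already granted by Proposition~\ref{thm-ptoinfty}, with $u_\infty$ optimal).

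It then remains to reconcile the two expressions for $\min(\MP_\infty^*)$. The variable $\lambda$ is redundant once $\sigma$ is known, because $\lambda=f_\infty+{\rm div}\,\sigma$; dropping it turns the constraints into ``$-{\rm div}\,\sigma\in\M_b(\R^N)$ and $-{\rm div}\,\sigma=f_\infty$ in $\om$''. Finally one checks that a minimizer can be chosen with ${\rm spt}(\sigma)\subset\overline{\om}$: on the open set $\R^N\setminus\overline{\om}$ one has ${\rm div}\,\sigma=0$, and the portion of $\sigma$ living there only adds to $\int d|\sigma|$, so it may be discarded (after a harmless modification preserving the divergence constraint in $\om$) without increasing the cost; thus $\int_{\R^N}|\sigma|$ may be replaced by $\int_{\overline{\om}}|\sigma|$.

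I expect the main obstacle to lie in this first step, namely in choosing the spaces and the pairing so that the polar of the Lipschitz constraint is genuinely the total variation on $\M_b(\R^N)^N$ rather than a larger element of $(L^\infty)^*$, and so that the adjoint of $u\mapsto\nabla u$ on functions vanishing on $\Dom$ is $-{\rm div}$ with the equation localized to $\om$; checking the Fenchel--Rockafellar qualification carefully enough to secure dual attainment is part of the same issue. These are classical but somewhat delicate points in the Beckmann--Monge--Kantorovich circle of ideas (see e.g. \cite{BBDP,EvGa}); once they are settled, the rest is bookkeeping.
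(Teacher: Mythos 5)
Your overall route is the same convex-duality computation the paper carries out, but the points you defer or assert in passing are precisely where the content of the proof lies, and as written they leave genuine gaps. First, the reduction of $(\MP_\infty)$ to a class of smooth functions on which the conjugates can be computed is not mere frame-fixing: the paper devotes Lemma \ref{regularize} to showing that every admissible $u$ (Lipschitz, $|\nabla u|\le\Lambda_\infty$ a.e.\ in $\om$, $u=0$ on $\Dom$) is the uniform limit of $\Lambda_\infty$-Lipschitz functions in $\C^\infty_c(\R^N)$ vanishing \emph{near} $\Dom$; your ``suitable completion'' presupposes exactly this. Second, your appeal to Fenchel--Rockafellar with the qualification ``$\Psi$ finite and continuous at $A0=0$'' does not apply as stated, because $A=\nabla$ is an unbounded, densely defined operator from (a subspace of) $\C_c(\R^N)$ into $\C_c(\R^N)^N$, so the textbook theorem for continuous linear operators gives neither the absence of a gap nor dual attainment. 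The paper instead writes $(\chi_{B_{\Lambda_\infty}}\circ A+\chi_C)^*$ as the biconjugate of the inf-convolution of the two conjugates and then checks by hand that this inf-convolution is convex and lower semicontinuous in $f$; it is this verification that removes the duality gap, and it is the conjugation of the boundary indicator $\chi_C$ that produces the boundary measure $\lambda$ together with the global constraint $-{\rm div}(\sigma)=f_\infty-\lambda$ in $\R^N$, rather than a constraint localized to $\om$.

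Third, your a posteriori reconstruction of $\lambda$ is not ``exact'' as claimed: if the dual constraint you derive is only $-{\rm div}(\sigma)=f_\infty$ in $\om$ (tested against functions supported in $\om$), then $f_\infty+{\rm div}(\sigma)$ is a priori a distribution of order one, not a bounded measure; nothing in your frame constrains $\sigma$ outside $\overline\om$, so this distribution need not be carried by $\Dom$, and its unit mass requires global information on $\sigma$. Likewise, in reconciling the two forms of $(\MP_\infty^*)$, discarding $\sigma\lfloor(\R^N\setminus\overline\om)$ does not obviously preserve $-{\rm div}(\sigma)\in\M_b(\R^N)$: truncating a vector \emph{measure} across $\Dom$ can create a divergence concentrated on $\Dom$ that is a first-order distribution rather than a measure (normal traces of measure fields are delicate), so this direction needs either the structure of an optimal $\sigma$ (a transport density along segments to nearest boundary points, which do stay in $\overline\om$) or a comparison through the common dual value; the paper is itself silent on this equivalence, but your specific truncation argument would not survive scrutiny without such an addition. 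Your remark that a signed boundary measure cannot lower the cost is correct and can be made rigorous (restrict a plan carrying $f_\infty+\lambda^-$ onto $\lambda^+$ to its $f_\infty$-marginal), but as stated it too is only a heuristic. In short: right architecture, essentially the paper's, but the ``first step'' you postpone is the proof.
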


The equalities $-{\rm div}(\sigma)=f_\infty-\lambda\ \mbox{in }\R^N$
and $-{\rm div}(\sigma)=f_\infty\ \mbox{in }\Omega$ should be understood
in the sense of distributions, that is:
$$
-{\rm div}(\sigma)=f_\infty-\lambda\ \mbox{in }\R^N\quad\mbox{ means: }
\int \nabla\varphi\cdot \sigma=\int \varphi \,d(f_\infty-\lambda)
\quad \forall\varphi\in{\mathcal C}_c^\infty(\R^N),
$$
$$
-{\rm div}(\sigma)=f_\infty\ \mbox{in }\Omega\quad\mbox{ means: }
\int \nabla\varphi\cdot \sigma=\int \varphi\, df_\infty
\quad \forall\varphi\in{\mathcal C}_c^\infty(\Omega).
$$

The proof of Proposition \ref{duality} requires the following Lemma:
\begin{lemma}\label{regularize}
Let $u\in \Lip(\om)$ such that $\vert \nabla u\vert\le \Lambda_\infty$
a.e. in $\om$ and $u=0$ on $\partial \Omega$. Then there exists a sequence $(u_n)_n$ in
${\mathcal C}^\infty_c(\R^N)$ such that
for any $n\in\N$:
$$
u_n\rightarrow u\mbox{ uniformly in }{\overline \Omega}
$$
$$
u_n\mbox{ is }\Lambda_\infty\mbox{-Lipschitz and }u_n=0
\mbox{ on a neighborhood of }\partial \Omega.
$$
\end{lemma}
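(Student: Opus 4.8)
The plan is to obtain $(u_n)$ in two stages: first cut off $u$ near $\partial\Omega$ so that it vanishes in a neighborhood of the boundary while keeping the Lipschitz constant under control, then mollify. For the first stage, since $u$ is $\Lambda_\infty$-Lipschitz and vanishes on $\partial\Omega$, we have the barrier estimate $|u(x)|\le \Lambda_\infty\, d(x,\partial\Omega)$ for $x\in\overline\Omega$. Define, for small $\delta>0$,
$$
v_\delta(x) \, := \, \bigl(|u(x)| - \Lambda_\infty\delta\bigr)^+ \,\mathrm{sign}\bigl(u(x)\bigr),
$$
or more simply $v_\delta := \mathrm{sign}(u)\,\max\{|u|-\Lambda_\infty\delta,0\}$. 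Then $v_\delta$ is still $\Lambda_\infty$-Lipschitz (composition of $u$ with the $1$-Lipschitz maps $t\mapsto t^+$ and the truncation), it vanishes on $\{x : d(x,\partial\Omega)\le\delta\}$ by the barrier estimate, and $v_\delta\to u$ uniformly on $\overline\Omega$ as $\delta\to0$ because $\|v_\delta-u\|_\infty\le\Lambda_\infty\delta$.

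For the second stage, extend $v_\delta$ to all of $\R^N$ as a $\Lambda_\infty$-Lipschitz function (McShane--Whitney extension, or simply note it is already defined and Lipschitz on $\R^N$ if we set it to $0$ outside $\Omega$, which is consistent since it vanishes near $\partial\Omega$), and set $v_{\delta,\eps} := v_\delta * \rho_\eps$ where $\rho_\eps$ is a standard mollifier supported in $B(0,\eps)$. Convolution with a probability density does not increase the Lipschitz constant, so $v_{\delta,\eps}$ is $\Lambda_\infty$-Lipschitz; it is $C^\infty$; and for $\eps<\delta/2$ it vanishes on $\{d(x,\partial\Omega)\le\delta/2\}$, hence on a neighborhood of $\partial\Omega$, so in particular it has compact support in a fixed neighborhood of $\overline\Omega$ and may be taken in $\C^\infty_c(\R^N)$. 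Finally $\|v_{\delta,\eps}-v_\delta\|_\infty\le\Lambda_\infty\eps$ by the Lipschitz bound. Choosing $\delta=\delta_n\to0$ and then $\eps=\eps_n<\delta_n/2$ small enough that $\Lambda_\infty\eps_n\to0$, we set $u_n:=v_{\delta_n,\eps_n}$; then $u_n\in\C^\infty_c(\R^N)$, each $u_n$ is $\Lambda_\infty$-Lipschitz and vanishes on a neighborhood of $\partial\Omega$, and $\|u_n-u\|_\infty\le\Lambda_\infty(\delta_n+\eps_n)\to0$, which is exactly the assertion.

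The only genuinely substantive point is the barrier inequality $|u(x)|\le\Lambda_\infty\,d(x,\partial\Omega)$: given $x\in\Omega$, pick $y\in\partial\Omega$ with $|x-y|=d(x,\partial\Omega)$; the Lipschitz bound along the segment from $x$ to $y$ (which lies in $\overline\Omega$ if $\Omega$ is convex, and in general one uses that $u$ extends $\Lambda_\infty$-Lipschitzly to $\R^N$ together with $u(y)=0$) gives $|u(x)|=|u(x)-u(y)|\le\Lambda_\infty|x-y|$. So I would first record that a $\Lambda_\infty$-Lipschitz function on $\overline\Omega$ vanishing on $\partial\Omega$ extends to a $\Lambda_\infty$-Lipschitz function on $\R^N$ vanishing on $\R^N\setminus\Omega$ — this is immediate by taking the extension $\widetilde u(x):=\mathrm{sign}$-corrected infimal convolution, or more directly $\widetilde u(x):=\max\{u\}$-type McShane formulas — which makes the barrier estimate transparent and also supplies the extension needed for mollification. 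Everything else is the routine truncate-then-mollify argument sketched above; the main obstacle, such as it is, is simply being careful that the truncation is performed on the values of $u$ (so the Lipschitz constant is preserved) rather than spatially.
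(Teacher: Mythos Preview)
Your proof is correct and follows essentially the same approach as the paper: extend $u$ by zero to $\R^N$, truncate its values via the $1$-Lipschitz map $t\mapsto\mathrm{sign}(t)(|t|-\Lambda_\infty\delta)^+$ so that the result vanishes on $\{d(\cdot,\partial\Omega)\le\delta\}$, mollify with a kernel of radius less than $\delta/2$, and extract a diagonal subsequence. The paper's function $\theta_\eps\circ\tilde u$ is exactly your $v_\delta$ (with $\delta=\eps$); you are simply more explicit about the barrier estimate $|u(x)|\le\Lambda_\infty\, d(x,\partial\Omega)$ that underlies the vanishing near $\partial\Omega$.
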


\begin{proof}
We denote by ${\tilde u}$ the function $u$ extended by $0$ outside $\Omega$. For any $\eps>0$ we set:
$$\theta_\eps(t)=\left\{
\begin{array}{l l}
 0 &\mbox{ if }\vert t\vert\le \Lambda_\infty \eps\\
t-\mbox{sign}(t)\Lambda_\infty\eps &\mbox{ if }\vert t\vert\ge \Lambda_\infty \eps.\\
\end{array}
\right.
$$
The function $\theta_\eps\circ {\tilde u}$ remains $\Lambda_\infty$-Lipschitz and  satisfies:
\begin{equation}\label{groszero}
\theta_\eps\circ {\tilde u}(x)=0\mbox{ as soon as }d(x,\partial \Omega)\le \eps.
\end{equation}
We now make a standard regularization by convolution setting for any $\eps>0$ and $n\in \N$:
$$\psi_{n,\eps}(x)=\int_{B(0,1/n)} \rho_n(x)(\theta_\eps\circ {\tilde u})(x-y)\ dy$$
where $\rho_n:={1\over n}\rho(n \times\cdot)$ is a standard mollifier
obtained from a function $\rho$ satisfying
$$
\rho \in {\mathcal C}^\infty(\R^N, [0,+\infty[),\quad {\rm spt}(\rho)\subset B(0,1),
\quad \int_{B(0,1)} \rho(x)\ dx=1.
$$
For any $n\ge {2\over \eps}$, the function $\psi_{n,\eps}$ is 
${\mathcal C}^1$, $\Lambda_\infty$-Lipschitz and, by (\ref{groszero}), equals $0$ on
$\R^N \setminus \{ x \in \Omega, d(x,\partial \Omega)\le \frac{\eps}{2} \}$.
Moreover we have the following convergences:
$$
\psi_{n,\eps}\rightarrow \theta_\eps \circ {\tilde u}\mbox{ uniformly on }{\overline \Omega}
\mbox{ as }n\to +\infty,
$$
$$\theta_\eps\circ {\tilde u} \rightarrow {\tilde u} \mbox{ uniformly on }{\overline \Omega} \mbox{ as }\eps\to 0.$$
By extracting a diagonal subsequence of $(\psi_{n,\eps})_{n,\eps}$, we get the desired sequence $(u_n)_n$.
\end{proof}

\begin{proof}[Proof of Proposition \ref{duality}]
The above lemma allows us to rewrite problem $(\MP_\infty)$ in the following way:
$$\min(\MP_\infty)=\inf\{-<f_\infty,u>:\ u\in {\mathcal C}^1(\R^N)\cap {\mathcal C}_c(\R^N),\ \vert \nabla u\vert\le \Lambda_\infty,\ u=0\mbox{ on }\partial\Omega\}.$$
We introduce the operator $A: {\mathcal C}_c(\R^N)\rightarrow {\mathcal C}_c(\R^N)^N$ of domain ${\mathcal C}^1(\R^N)\cap {\mathcal C}_c(\R^N)$
defined as $Au:=\nabla u$ for all $u$ in its domain. We also introduce the characteristic functions
$\chi_{B_{\Lambda_\infty}}$ and $\chi_C$ defined by:
$$
\chi_{B_{\Lambda_\infty}}(\Phi)=\left\{ \begin{array}{l l }
0&\mbox{ if }\  \vert\Phi(x)\vert\le \Lambda_\infty,\quad \forall x\in\R^N\\
+\infty &\mbox{ elsewhere. }
\end{array}
\right.
\leqno  \forall \Phi\in {\mathcal C}_c(\R^N)^N,
$$
$$
\chi_{C}(\varphi)=\left\{ \begin{array}{l l }
0&\mbox{ if }\ \varphi(x)=0,\quad \forall x\in \partial \Omega\\
+\infty &\mbox{ elsewhere. }
\end{array}
\right.
\leqno  \forall \varphi\in {\mathcal C}_c(\R^N). \quad
$$
We have:
\begin{eqnarray*}
\min(\MP_\infty)&=&-\max\{<f_\infty,u>-(\chi_{B_{\Lambda_\infty}}\circ A+\chi_{C})(u):\ u\in {\mathcal C}_c(\R^N)\}\\
&=&-(\chi_{B_{\Lambda_\infty}}\circ A+\chi_{C})^*(f_\infty)=-\left((\chi_{B_{\Lambda_\infty}}
\circ A)^*\bigtriangledown \chi_{C}^*\right)^{**}(f_\infty)\\
\end{eqnarray*}
where $\bigtriangledown$ is the inf-convolution, that is for all $f\in {\mathcal M}^+_b(\R^N)$:
$$
(\chi_{B_{\Lambda_\infty}}\circ A)^*\bigtriangledown \chi_{C}^*(f)=
\inf_{\lambda\in {\mathcal M}^+_b(\R^N)} \{(\chi_{B_{\Lambda_\infty}}\circ A)^*(f -\lambda)+\chi_{C}^*(\lambda)\}.
$$
Now, by classical computations, we have that for all $\lambda \in {\mathcal M}^+_b(\R^N)$
\begin{eqnarray*}
(\chi_{B_{\Lambda_\infty}}\circ A)^*(f-\lambda)
&=&\inf_{\sigma \in \mathrm{dom } A^*}
\{\chi^*_{B_{\Lambda_\infty}}(\sigma):\ A^*(\sigma)=f-\lambda\}\\
&=&\inf_{\sigma\in\M_b(\R^N)^N}\{\Lambda_{\infty}\int \vert \sigma\vert:\
-{\rm div}(\sigma)=f-\lambda\mbox{ in }\R^N\}
\end{eqnarray*}
and:
$$\chi_{C}^*(\lambda)=\sup_{u\in {\mathcal C}_c(\R^N),\ u=0\ \mbox{on }\partial\Omega}<\lambda,u>=\left\{
\begin{array}{c c}
0 &\mbox{ if } {\rm spt}(\lambda)\subset \partial \Omega\\
+\infty&\mbox{ elsewhere. }
\end{array}
\right.
$$
The inf-convolution thus gives:
$$
(\chi_{B_{\Lambda_\infty}}\circ A)^*\bigtriangledown \chi_{C}^*(f)=\inf_{\lambda\in{\mathcal M}_b^+(\partial\Omega)}
\inf_{\sigma\in\M_b(\R^N)^N}\{\Lambda_{\infty}\int d\vert \sigma\vert:\
-{\rm div}(\sigma)=f-\lambda\mbox{ in }\R^N\}$$
which happens to be a convex, lower semi-continuous function in $f$.
By consequence:
$$\min(\MP_\infty)= -\inf_{\lambda\in{\mathcal M}_b^+(\partial \Omega)}\inf_{\sigma\in\M_b(\R^N)^N}\{\Lambda_{\infty}\int d\vert \sigma\vert:\
-{\rm div}(\sigma)=f_\infty-\lambda\mbox{ in }\R^N\}.$$
We notice that if $\lambda$ is not a probability then the second infimum is $+\infty$,
otherwise it is a minimum.
This proves the thesis.
\end{proof}

The previous result of course holds for the approximating problems:

\begin{proposition}[Duality for the approximating problems]
For every $p> 1$, setting $p'={p\over p-1}$, by standard duality we have:
\begin{multline}\label{dualAp}
\min(\MP_p)= -\min(\MP_p^*)
:=-\min_{\sigma\in L^{p'}(\R^N)}\{ {1\over p'}\lambda_p^{p'-1}\int_{\overline \Omega} \vert \sigma\vert^{p'}\ dx:\ {\rm spt}(\sigma)\subset{\overline \Omega},\\  -{\rm div}\sigma \in \M_b (\R^N)\ \mbox{and} \
-{\rm div}\sigma=f_p\mbox{ in }\Omega \}.
\end{multline}
\end{proposition}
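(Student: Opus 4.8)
The plan is to carry out, for finite $p$, the very same convex-duality computation as in the proof of Proposition~\ref{duality}, the only new ingredient being that the indicator $\chi_{B_{\Lambda_\infty}}$ is replaced by the everywhere finite, continuous integrand $q\mapsto\frac1{p\lambda_p}|q|^p$. First I would note that, by a density argument (approximate the minimizer $u_p$ of $(\MP_p)$ in the $W^{1,p}$ norm by functions of $\C^1(\R^N)\cap\C_c(\R^N)$ compactly supported in $\Omega$, and use the continuity on $W_0^{1,p}(\Omega)$ of the functional of $(\MP_p)$), $\min(\MP_p)$ equals the infimum of $\Psi(Au)-\langle f_p,u\rangle$ over $\{u\in\C^1(\R^N)\cap\C_c(\R^N):u=0\text{ on }\partial\Omega\}$, where $Au:=\nabla u$ and $\Psi(q):=\frac1{p\lambda_p}\int_{\R^N}|q|^p\,dx$. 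Introducing the operator $A:\C_c(\R^N)\to\C_c(\R^N)^N$ and the indicator $\chi_C$ of $\{u=0\text{ on }\partial\Omega\}$ exactly as in Proposition~\ref{duality}, one gets $\min(\MP_p)=-(\Psi\circ A+\chi_C)^*(f_p)=-\big((\Psi\circ A)^*\bigtriangledown\chi_C^*\big)^{**}(f_p)$, and the inf-convolution bookkeeping of that proof transfers verbatim. (Equivalently one may dualize directly in the pairing $W_0^{1,p}(\Omega)$–$L^{p'}(\Omega)^N$; the outcome is the same.)

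The single computation that changes is the Fenchel conjugate of $\Psi$: by Young's inequality the conjugate of $t\mapsto\frac1{p\lambda_p}t^p$ on $[0,+\infty)$ is $s\mapsto\frac1{p'}\lambda_p^{p'-1}s^{p'}$ — here one uses $p'-1=\frac1{p-1}$ and $(p-1)p'=p$ — so that $\Psi^*(\sigma)=\frac1{p'}\lambda_p^{p'-1}\int_{\R^N}|\sigma|^{p'}\,dx$ when $\sigma$ is represented by an $L^{p'}$ vector field and $+\infty$ otherwise, whence $(\Psi\circ A)^*(g)=\inf\{\frac1{p'}\lambda_p^{p'-1}\int|\sigma|^{p'}dx:\sigma\in L^{p'}(\R^N)^N,\ -\mathrm{div}\,\sigma=g\text{ in }\R^N\}$; the conjugate $\chi_C^*$ (zero iff $\mathrm{spt}\,\lambda\subset\partial\Omega$) is unchanged. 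Since $\Psi$ is finite and continuous on $L^p(\R^N)^N$ there is no duality gap, and computing the inf-convolution with $\chi_C^*$ and checking, as in Proposition~\ref{duality}, that the resulting expression is convex and lower semicontinuous in $f_p$, gives
$$\min(\MP_p)=-\inf_{\lambda\in\M_b^+(\partial\Omega)}\ \inf_{\sigma\in L^{p'}(\R^N)^N}\Big\{\tfrac1{p'}\lambda_p^{p'-1}\!\int|\sigma|^{p'}dx:\ -\mathrm{div}\,\sigma=f_p-\lambda\text{ in }\R^N\Big\},$$
and the constraint ``$\exists\,\lambda\in\M_b^+(\partial\Omega)$ with $-\mathrm{div}\,\sigma=f_p-\lambda$ in $\R^N$'' is exactly ``$-\mathrm{div}\,\sigma\in\M_b(\R^N)$ and $-\mathrm{div}\,\sigma=f_p$ in $\Omega$''.

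What remains — and this is the only delicate point — is to incorporate the support condition $\mathrm{spt}\,\sigma\subset\overline\Omega$ and to know that the infimum is attained, so that one may write $\min(\MP_p^*)$. I would settle both at once via the extremality relations of the duality: the primal optimum being $u_p$, any dual optimizer is forced to be $\sigma=\frac1{\lambda_p}|\nabla u_p|^{p-2}\nabla u_p=\sigma_p$ of \eqref{defsigp}. Now $\sigma_p$ vanishes outside $\overline\Omega$; it satisfies $-\mathrm{div}\,\sigma_p=f_p$ in $\Omega$ by the equation in $(\mbox{P}_p^{\lambda_p})$; and, because on the regular domain $\Omega$ the bounded first eigenfunction $u_p$ is Lipschitz up to $\partial\Omega$ (boundary regularity for $-\Delta_p u_p=\lambda_p u_p^{p-1}\in L^\infty$), the Gauss–Green formula applied to the $L^{p'}$ field $\sigma_p$ produces only a bounded term supported on $\partial\Omega$, so that $-\mathrm{div}\,\sigma_p\in\M_b(\R^N)$ and $\sigma_p$ is admissible in the stated $(\MP_p^*)$. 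A direct computation, using $\int_\Omega|\nabla u_p|^p\,dx=\lambda_p$, $\|u_p\|_p=1$ and $(p-1)p'=p$, then yields $\frac1{p'}\lambda_p^{p'-1}\int_{\overline\Omega}|\sigma_p|^{p'}dx=\frac1{p'}=1-\frac1p=-\min(\MP_p)$; since the feasible set of the stated problem is contained (after extension by zero) in that of the $\R^N$-formulation, this forces the two infima to share the value $-\min(\MP_p)$ and to be attained at $\sigma_p$. I expect the main obstacle to be precisely this last matching step — justifying the passage from the $\R^N$-divergence form of the dual to the stated compactly supported, measure-divergence form — which becomes immediate once the dual optimum is identified explicitly as $\sigma_p$ and the boundary regularity of $u_p$ is invoked.
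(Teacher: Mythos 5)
Your core computation is the same as the paper's own (sketched) proof: rewrite $(\MP_p)$ over $\C^1(\R^N)\cap\C_c(\R^N)$ functions vanishing on $\partial\Omega$, conjugate $G\circ A+\chi_C$ exactly as in Proposition \ref{duality}, and replace the indicator of the $\Lambda_\infty$-ball by the $p$-th power integrand, whose Fenchel transform (your Young-inequality computation) is the weighted $p'$-th power functional, finite only on fields absolutely continuous with $L^{p'}$ density. Your final paragraph goes beyond the paper, which defers the admissibility and optimality of $\sigma_p$ to Proposition \ref{min-sigma-inf}; your appeal to boundary regularity of $u_p$ to ensure $-{\rm div}\,\sigma_p\in\M_b(\R^N)$ addresses a point the paper passes over silently, and is legitimate.

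There is, however, one step in that final matching argument that does not hold as written: the claim that the feasible set of the stated problem is contained in that of the $\R^N$-formulation you derived. That formulation quantifies over $\lambda\in\M_b^+(\partial\Omega)$, whereas for a $\sigma$ feasible in the stated problem the associated boundary term $\lambda:=f_p+{\rm div}\,\sigma$ is supported on $\partial\Omega$ but has no reason to be nonnegative; the same sign issue affects your assertion that the constraint ``$\exists\,\lambda\in\M_b^+(\partial\Omega)$ with $-{\rm div}\,\sigma=f_p-\lambda$ in $\R^N$'' is \emph{exactly} ``$-{\rm div}\,\sigma\in\M_b(\R^N)$ and $-{\rm div}\,\sigma=f_p$ in $\Omega$''. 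Hence the inequality asserting that the stated infimum is at least $-\min(\MP_p)$ does not follow from a set inclusion. The gap is easy to close by weak duality, which bypasses the sign of $\lambda$ altogether: if $\sigma$ is feasible for the stated problem, then for every $u\in\C_c^\infty(\Omega)$ one has $\langle f_p,u\rangle=\int\nabla u\cdot\sigma\,dx$, so Young's inequality with weight $\lambda_p$ gives ${1\over p\lambda_p}\int_\Omega|\nabla u|^p\,dx-\langle f_p,u\rangle\ \ge\ -{\lambda_p^{p'-1}\over p'}\int_{\overline\Omega}|\sigma|^{p'}\,dx$, and by density of $\C_c^\infty(\Omega)$ in $W_0^{1,p}(\Omega)$ (and continuity of both terms) the left-hand side can be replaced by $\min(\MP_p)$. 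Combined with your computation that $\sigma_p$ is feasible with value ${1\over p'}=-\min(\MP_p)$, this yields both the equality $\min(\MP_p)=-\min(\MP_p^*)$ and the attainment, without needing the feasible-set comparison.
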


\begin{proof}[Sketch of the proof.]
As in the proof of Proposition \ref{duality}, it can be proved that:
\begin{multline*}
\min(\MP_p)
=\inf\left\{ (G\circ A+\chi_C)(u)-<f_p,u>:\ u\in {\mathcal C}^1(\R^N)\cap {\mathcal C}_c(\R^N)\right\}=\\ -(G\circ A+\chi_C)^*(f_p)
\end{multline*}
where $G(\Phi)$ is defined for all $\Phi\in {\mathcal C}_c(\R^N,\R^N)$ by $G(\Phi)={1\over p\lambda_p}\int \vert \Phi(x)\vert^p\ dx$.
Its Fenchel transform is for any $\rho\in {\mathcal M}_b(\R^N, \R^N)$:
$$
G^*(\sigma)=\left\{
\begin{array}{l l}
{1\over p'}\lambda_p^{p'-1}\int \vert \underline{\rho}\vert^{p'}\ dx\
&\mbox{ if $\rho \ll dx$ with $\rho=\underline{\rho}\ dx$,} \\
+\infty&\mbox{ otherwise}.\\   
\end{array}
\right.
$$
The rest of the proof follows that of Proposition \ref{duality}.
\end{proof}

It can now be checked that also the dual problems converge that is:
$$\min(\MP_p^*)\to\min(\MP_\infty^*).$$

More precisely, one has the following:

\begin{proposition}\label{min-sigma-inf}
The function $\sigma_p$ defined in (\ref{defsigp}) is the unique minimizer of $(\MP_p^*)$. Moreover, its limit $\sigma_\infty$
given by Theorem \ref{thm-ptoinfty} is a solution of $(\MP_\infty^*)$. In other words, setting $\lambda_\infty:=f_\infty+{\rm div}\sigma_\infty$,
the couple $(\lambda_\infty,\sigma_\infty)\in{\mathcal P}(\partial\Omega)\times{\mathcal M}_b(\R^N)^N$ minimizes $(\MP_\infty^*)$.
\end{proposition}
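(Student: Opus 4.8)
The plan is to proceed in two stages: first identify $\sigma_p$ as the minimizer of the approximating dual problem $(\MP_p^*)$, then pass to the limit using the convergences already established and the $\Gamma$-convergence framework. For the first stage I would argue as follows. Since $u_p$ minimizes the primal problem $(\MP_p)$, the Euler--Lagrange equation gives $-\mathrm{div}(|\nabla u_p|^{p-2}\nabla u_p) = \lambda_p u_p^{p-1} = \lambda_p f_p / dx$ in $\Omega$ (recall $f_p = u_p^{p-1}dx$), hence $-\mathrm{div}(\sigma_p) = f_p$ in $\Omega$ with $\sigma_p = |\nabla u_p|^{p-2}\nabla u_p / \lambda_p$, and $\mathrm{spt}(\sigma_p)\subset\overline\Omega$. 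So $\sigma_p$ is admissible for $(\MP_p^*)$. That it is \emph{the} minimizer follows from the no-gap equality $\min(\MP_p) = -\min(\MP_p^*)$ together with strict convexity of the functional $\sigma\mapsto \frac{1}{p'}\lambda_p^{p'-1}\int |\sigma|^{p'}dx$ (as $p'>1$): one checks that the value of this functional at $\sigma_p$ equals $\min(\MP_p)$ by substituting $|\sigma_p|^{p'} = |\nabla u_p|^{p}/\lambda_p^{p'}$ and using $\int|\nabla u_p|^p dx = \lambda_p$ together with $\langle f_p, u_p\rangle = 1$, so the primal-dual values match and $\sigma_p$ realizes the minimum; uniqueness is then automatic from strict convexity.

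For the second stage, I would combine three ingredients: (i) by Lemma \ref{lemEST}, $\sigma_p \ws \sigma_\infty = \xi_\infty\mu_\infty$ in $\M_b(\overline\Omega,\R^N)$ with the mass bound $\int_{\overline\Omega}d|\sigma_p|\le |\Omega|^{1/p}$, and lower semicontinuity of the total variation under weak* convergence gives $\int_{\overline\Omega}|\sigma_\infty| \le \liminf_p \int_{\overline\Omega}|\sigma_p| \le 1$; (ii) passing to the limit in $-\mathrm{div}(\sigma_p)=f_p$ in $\Omega$ against test functions $\varphi\in\C_c^\infty(\Omega)$ and using $f_p\ws f_\infty$ shows $-\mathrm{div}(\sigma_\infty)=f_\infty$ in $\Omega$, with $\mathrm{spt}(\sigma_\infty)\subset\overline\Omega$ preserved in the limit; hence $\sigma_\infty$ is admissible for $(\MP_\infty^*)$. (iii) For optimality I would use the convergence of the values: by Theorem \ref{propriegamma} and Proposition \ref{thm-ptoinfty}, $\min(\MP_p)\to\min(\MP_\infty)$, and by Proposition \ref{duality} this equals $-\min(\MP_\infty^*)$; combined with $\min(\MP_p)=-\min(\MP_p^*)$ this yields $\min(\MP_p^*)\to\min(\MP_\infty^*)$. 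Now $\Lambda_\infty\int_{\overline\Omega}|\sigma_\infty|$ is an admissible value for $(\MP_\infty^*)$, so it is $\ge\min(\MP_\infty^*)$; for the reverse inequality one needs $\Lambda_\infty\int_{\overline\Omega}|\sigma_\infty| \le \lim_p \min(\MP_p^*) = \lim_p \frac{1}{p'}\lambda_p^{p'-1}\int|\sigma_p|^{p'}dx$. Since $p'\to 1$, $\frac{1}{p'}\to 1$ and $\lambda_p^{p'-1} = (\lambda_p^{1/p})^{p'(p-1)/p \cdot p/(p-1)}$... more cleanly, $\lambda_p^{p'-1}=\lambda_p^{1/(p-1)}=(\lambda_p^{1/p})^{p/(p-1)}\to\Lambda_\infty$; and by Jensen/Hölder $\int_{\overline\Omega}|\sigma_p| \le (\int_{\overline\Omega}|\sigma_p|^{p'})^{1/p'}|\Omega|^{1/p}$, which lets one compare $\int|\sigma_\infty|\le\liminf\int|\sigma_p|$ with the $L^{p'}$ quantity. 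Assembling these, $\Lambda_\infty\int|\sigma_\infty| \le \liminf_p \Lambda_\infty(\int|\sigma_p|^{p'})^{1/p'}|\Omega|^{1/p} = \liminf_p \Lambda_\infty(\int|\sigma_p|^{p'})^{1/p'}$, and one reconciles this with $\lim_p\frac{1}{p'}\lambda_p^{p'-1}\int|\sigma_p|^{p'}$ using that on a sequence realizing the $\liminf$ the quantity $\int|\sigma_p|^{p'}$ stays bounded. Finally, setting $\lambda_\infty := f_\infty + \mathrm{div}\,\sigma_\infty$, the distributional computation shows $\mathrm{spt}(\lambda_\infty)\subset\partial\Omega$ and testing against $\varphi\equiv 1$ gives $\lambda_\infty(\R^N)=f_\infty(\overline\Omega)=1$, so $\lambda_\infty\in\MP(\partial\Omega)$ and $(\lambda_\infty,\sigma_\infty)$ is admissible and optimal for $(\MP_\infty^*)$.

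The main obstacle I anticipate is the third step of stage two: controlling the limit of $\frac{1}{p'}\lambda_p^{p'-1}\int_{\overline\Omega}|\sigma_p|^{p'}dx$ and matching it with $\Lambda_\infty\int_{\overline\Omega}|\sigma_\infty|$. The subtlety is that $|\sigma_p|^{p'}dx$ and $|\sigma_p|dx$ have comparable but not equal masses (differing by a factor $|\Omega|^{O(1/p)}\to 1$), and one must ensure no mass of $\sigma_p$ escapes to concentrate in a way that the weak* limit loses — but since everything lives on the compact set $\overline\Omega$, total variation is weak*-lower-semicontinuous and tightness is automatic, so the convergence $\int|\sigma_\infty|\le\liminf\int|\sigma_p|$ is safe. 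The genuine work is the reverse direction, i.e. showing $\liminf_p\min(\MP_p^*) \le \Lambda_\infty\int|\sigma_\infty|$ does \emph{not} hold and instead one uses the value-convergence to pin down $\min(\MP_\infty^*) = \lim\min(\MP_p^*) \ge \Lambda_\infty\int|\sigma_\infty| \ge \min(\MP_\infty^*)$, forcing equality. In fact the clean route avoids estimating the integral directly: $\sigma_\infty$ is admissible, so $\Lambda_\infty\int|\sigma_\infty|\ge\min(\MP_\infty^*)$; and by weak* lower semicontinuity plus the value convergence $\min(\MP_p^*)\to\min(\MP_\infty^*)$, one gets $\Lambda_\infty\int|\sigma_\infty| \le \Lambda_\infty\liminf_p\int|\sigma_p| \le \liminf_p \min(\MP_p^*)/\big(\tfrac{1}{p'}\lambda_p^{p'-1}/\Lambda_\infty \cdot (\text{correction})\big) = \min(\MP_\infty^*)$, where the correction factors tend to $1$; hence equality and $\sigma_\infty$ minimizes $(\MP_\infty^*)$. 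Verifying that all the $p$-dependent constants converge to $1$ (resp. $\Lambda_\infty$) is the one place where care with the exponents $p,p'$ is essential, but it is a routine asymptotic computation once set up correctly.
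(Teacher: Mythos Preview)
Your plan is correct and follows essentially the same route as the paper: verify $\sigma_p$ is admissible for $(\MP_p^*)$ and attains the dual value (hence is the unique minimizer by strict convexity), pass to the limit in the divergence constraint to get admissibility of $\sigma_\infty$, and then squeeze $\Lambda_\infty\int|\sigma_\infty|$ between $\min(\MP_\infty^*)$ (from admissibility) and $\lim_p\min(\MP_p^*)=\min(\MP_\infty^*)$ (from the value convergence) using lower semicontinuity of the total variation. The one place the paper is cleaner is precisely your anticipated obstacle: instead of H\"older plus tracking the $p$-dependent exponents, the paper uses the elementary pointwise bound $s^{p'}/p' \ge s - 1/p$ for $s>0$, which directly gives
\[
\min(\MP_p^*)=\tfrac{1}{p'}\lambda_p^{p'-1}\!\int|\sigma_p|^{p'}dx \,\ge\, \lambda_p^{p'-1}\Bigl(\int|\sigma_p|\,dx - \tfrac{|\Omega|}{p}\Bigr),
\]
and since $\lambda_p^{p'-1}=(\lambda_p^{1/p})^{p'}\to\Lambda_\infty$ the limit is immediate, with no asymptotic bookkeeping required.
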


\begin{proof}
As $u_p$ is an eigenfunction of the p-Laplacian, recalling (\ref{defsigp}),  $\sigma_p$ is admissible for $(\MP_p^*)$.
 Moreover by Lemma \ref{lemEST}, we have:
$$\min(\MP_p)={1\over p\lambda_p}\int_\Omega \vert \nabla u_p\vert^p\ dx -<f_p,u_p>={1\over p}-\int_{\Omega} u_p^p(x)\ dx=-{1\over p'},$$
$$\mbox{and} \quad \quad {1\over p'}\lambda_p^{p'-1}\int_{\Omega} \vert \sigma_p\vert^{p'}\ dx={1\over p'\lambda_p}\int_\Omega \vert \nabla u_p\vert^p\ dx={1\over p'}.$$
Then by (\ref{dualAp}), $\sigma_p$ is a solution of $(\MP_p^*)$, the uniqueness follows from the
strict convexity of the functional $\sigma\mapsto \int \vert \sigma\vert^{p'}\ dx$.\\
Passing to the limit in the constraint of $(\MP_p^*)$, we obtain that the measure $\sigma_\infty$ satisfies $-{\rm div}(\sigma_\infty)=f_\infty$ in $\Omega$.
It then remains to prove that
$$
\min({\MP_\infty^*})\ge \Lambda_\infty \int_{\overline\Omega} \vert \sigma_\infty \vert.
$$
Following  the proof of Theorem 4.2 in \cite{BBDP},
we use the inequality ${s^{p'}\over p'}\ge s-{1\over p}$ for any $s>0$, and get:
$$\min(\MP_p^*)={1\over p'}\lambda_p^{p'-1}\int \vert \sigma_p\vert^{p'}\ dx
\ge \lambda_p^{p'-1} \left(\int \vert \sigma_p\vert\ dx -{|\Omega|\over p} \right).$$
Then, passing to the limit, by Corollary \ref{thm-ptoinfty},
we obtain:
$$
\min(\MP_\infty^*) \,\ge\, \liminf_{p\to +\infty}\, \lambda_p^{p'-1}\int \vert \sigma_p\vert\ dx
\,=\, \liminf_{p\to +\infty}\, (\lambda_p^{1/p})^{p'}\int \vert \sigma_p\vert\ dx 
\,\ge\, \Lambda_\infty\int \vert\sigma_\infty \vert.$$
\end{proof}

\CAAnewsubsection{A second $\Gamma$-convergence approach}\hfill

\noindent An other way of obtaining the problem $(\MP_\infty)$ in a limit process,
which we shall use is the following of the paper, is to define for any $p \in \,]N,+\infty]$
the functional $G_p: \M (\Omega) \times \C_0 (\Omega) \to \overline{\R}$ by
\begin{equation}
G_p(g,v)= \left\{\begin{array}{ll}
- \langle g, v \rangle  & \mbox{if} \ g \in L^{p'}, \ \|g\|_{p'} \leq
1 \ \mbox{and} \ v \in W^{1,p}_0 (\Omega), \ \|\nabla v\|_{p} \leq \lambda_p^{1/p},\\
+\infty & \mbox{otherwise.}
\end{array} \right.
\end{equation}
and
\begin{equation}
G_\infty(g,v)= \left\{\begin{array}{ll}
- \langle g, v \rangle  & \mbox{if} \ \int_\Omega d|g| \leq
1 \ \mbox{and} \ v \in W^{1,\infty}_0 (\Omega), \ \|\nabla v\|_{\infty} \leq
\Lambda_\infty,\\
+\infty & \mbox{otherwise.}
\end{array} \right.
\end{equation}

For $p\in \,]N,+\infty[\,$ it happens that the couple $(f_p,u_p)$ is a minimizer
of the functional $G_p$. Indeed by the definitions above and (\ref{min}) it comes
$$
-G_p(g,v)=\langle g, v \rangle \leq \ \|g\|_{p'}  \|v\|_{p} \leq
\frac{1 }{ \lambda_p^{1/p}}\|\nabla v\|_{p} \leq 1 = \langle f_p,u_p\rangle = -G_p(f_p,u_p).
$$

We now notice that this property does also hold in the limit $p=+\infty$:

\begin{proposition}\label{maximize}
Let $\alpha >0$, then the generalized sequence $(G_p)_{N+\alpha<p}$ is
  equicoercive and  $\Gamma$-converges to
  $G_\infty$ with respect to the $(w^* \times \rm{uniform})$-convergence.
In particular the couple $(f_\infty,u_\infty)$ is a minimizer of
the functional $G_\infty$.
\end{proposition}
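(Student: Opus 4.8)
The strategy is to verify the two ingredients of Theorem~\ref{propriegamma}: equicoercivity of the family $(G_p)_{N+\alpha<p}$ and its $\Gamma$-convergence to $G_\infty$ in the product topology of weak-$*$ convergence on $\M(\om)$ and uniform convergence on $\C_0(\om)$. Once these are established, the final assertion follows immediately: indeed, for $N+\alpha<p<\infty$ the computation displayed just before the statement shows that $(f_p,u_p)$ minimizes $G_p$ (with $\min G_p=-1$), and by Lemma~\ref{lemEST} we know $f_p \ws f_\infty$ and $u_p\to u_\infty$ uniformly; so $(f_\infty,u_\infty)$ is a cluster point of a sequence of (almost-)minimizers, hence minimizes $G_\infty$ by Theorem~\ref{propriegamma}.

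For equicoercivity, I would argue that any sequence $(g_p,v_p)$ with $G_p(g_p,v_p)$ bounded above is precompact. The constraint $\|g_p\|_{p'}\le 1$ together with $|\om|<\infty$ gives $\int d|g_p|\le |\om|^{1-1/p'}$, so by compactness of bounded sets in $\M_b(\barom)$ (a compact space) the $g_p$ are weak-$*$ precompact; one should also check that the weak-$*$ limit $g$ satisfies $\int d|g|\le 1$, which follows because for fixed $q$ one has $\|g_p\|_q\le \|g_p\|_{p'}|\om|^{1/q-1/p'}\to|\om|^{1/q}$ once $p'<q$, and letting $q\to\infty$ gives $\|g\|_\infty$... more carefully, lower semicontinuity of the total variation under weak-$*$ convergence only gives $\int d|g|\le\liminf\int d|g_p|$, which is not quite $\le 1$; so instead I would test against a fixed continuous function and use the $L^q$ bound, exactly as in the proof of Lemma~\ref{lemEST} for $f_\infty$. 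For the $v_p$ component, the bound $\|\nabla v_p\|_p\le\lambda_p^{1/p}$ combined with Lemma~\ref{lemEST}'s estimate $\lambda_p^{1/p}\le 1/R_1$ and Hölder's inequality gives a uniform $W^{1,q}_0(\om)$ bound for each fixed $q>N$ (using $p>N+\alpha$ so that $p$ stays bounded away from $N$), hence precompactness in $\C_0(\om)$ by Morrey embedding — this is the same argument already used in Lemma~\ref{lemEST}.

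For the $\Gamma$-$\liminf$ inequality, let $(g_p,v_p)\ws\times\to(g,v)$ with $\liminf G_p(g_p,v_p)<+\infty$; passing to a subsequence realizing the liminf, the values are finite, so $\|g_p\|_{p'}\le 1$ and $\|\nabla v_p\|_p\le\lambda_p^{1/p}$. By the $L^q$-testing argument above, $\int d|g|\le 1$; and by the $W^{1,q}_0$-bound plus weak lower semicontinuity, letting $q\to\infty$ along the lines of Proposition~\ref{thm-ptoinfty}'s proof, $v\in W^{1,\infty}_0(\om)$ with $\|\nabla v\|_\infty\le\Lambda_\infty$. Finally $\langle g_p,v_p\rangle\to\langle g,v\rangle$ because $v_p\to v$ uniformly and $g_p\ws g$ with uniformly bounded masses — this is the pairing of uniform with weak-$*$ convergence, exactly as in (\ref{easyPart}). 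Hence $\liminf G_p(g_p,v_p)\ge -\langle g,v\rangle=G_\infty(g,v)$. For the $\Gamma$-$\limsup$ inequality one cannot simply take a constant recovery sequence, since an admissible $g$ for $G_\infty$ need not lie in $L^{p'}$ nor satisfy $\|g\|_{p'}\le 1$; so the recovery sequence must be built in two stages. In the $v$-variable I would invoke Lemma~\ref{regularize} to approximate $v$ uniformly by smooth $\Lambda_\infty$-Lipschitz functions vanishing near $\partial\om$, then rescale by $\lambda_p^{1/p}/\Lambda_\infty\le 1$ (which only decreases the gradient, keeping admissibility since $\lambda_p^{1/p}\le 1/R_1\le\Lambda_\infty$... here one needs $\Lambda_\infty=1/R_1$, or more simply rescales by $\min\{1,\lambda_p^{1/p}/\Lambda_\infty\}$) to land in the constraint $\|\nabla v_p\|_p\le\lambda_p^{1/p}$. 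In the $g$-variable I would approximate the measure $g$ weak-$*$ by mollified densities $g_n=g*\rho_n\in\C^\infty_c$, normalized so that $\|g_n\|_{p'}\le 1$; since $\int d|g|\le 1$ and $|\om|<\infty$, for $p$ large the $L^{p'}$ norm of $g_n$ is close to $\int d|g_n|\le\int d|g|\le 1$, so only a vanishing renormalization is needed. A diagonal extraction over these two approximations, together with continuity of the pairing, yields a recovery sequence with $G_p(g_p,v_p)\to G_\infty(g,v)$.

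The main obstacle is the $\Gamma$-$\limsup$ construction in the $g$-variable: one must produce, for each large $p$, a function in $L^{p'}$ with $L^{p'}$-norm at most $1$ that is weak-$*$ close to $g$, and control simultaneously that $\langle g_p,v_p\rangle\to\langle g,v\rangle$. The delicate point is that the normalization factor forcing $\|g_p\|_{p'}\le 1$ must tend to $1$; this is where finiteness of $|\om|$ and the $L^q$-interpolation inequality $\|h\|_{p'}\le\|h\|_\infty^{1-q/p'}\|h\|_q^{q/p'}$ (for $q<p'$) are used, letting first $n\to\infty$ and then $p\to\infty$ via a diagonal argument. Everything else is a routine repetition of arguments already carried out in Lemma~\ref{lemEST} and Proposition~\ref{thm-ptoinfty}.
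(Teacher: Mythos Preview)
Your proposal is correct and follows essentially the same approach as the paper: the $\Gamma$-liminf is obtained from H\"older bounds plus lower semicontinuity, and the $\Gamma$-limsup recovery sequence is built by rescaling $v$ (the paper simply takes $v_p=\frac{\lambda_p^{1/p}}{\Lambda_\infty}\,v$, without invoking Lemma~\ref{regularize}) and mollifying $g$. Your worry about $\int d|g|\le 1$ in the liminf is unfounded, since lower semicontinuity of the total variation together with $\|g_p\|_1\le\|g_p\|_{p'}|\Omega|^{1/p}\le|\Omega|^{1/p}\to 1$ already yields it.
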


\begin{proof}
We only prove the $\Gamma$-convergence, and first show the $\Gamma-\liminf$ inequality,
that is:
\begin{equation}\label{G-liminf}
\Gamma-\liminf_{p\to +\infty} G_p\ge G_\infty.
\end{equation}
Let $(g_p,v_p)\in {\rm L}^{p'}(\Omega)\times W_0^{1,p}(\Omega)$ and $(g,v)\in \M (\Omega) \times \C_0
(\Omega)$ such that $(g_p,v_p)$ converges to $(g,v)$ for the $(w^* \times \rm{uniform})$-topology. We easily have:
$$
- \langle g_p, v_p \rangle=-\int v_p\ dg_p\to -\int v\ dg=- \langle g, v \rangle;\
\int_{\Omega}d\vert g\vert= \lim_{p\to +\infty}  \|g\|_{p'} \leq 1.
$$
Moreover, for any $\varphi\in {\mathcal C}_c^\infty(\Omega)$, it holds:
$$
\left\vert\int v_p(x) \nabla\varphi(x)\ dx\right\vert
\le \|\nabla v_p\|_{p} \|\varphi\|_{p'}\le \lambda_p^{1/p}\|\varphi\|_{p'}.
$$
Passing to the limit as $p$ tends to $\infty$ this yields:
$$
\left\vert\int v(x) \nabla\varphi(x)\ dx\right\vert\le  \Lambda_\infty\|\varphi\|_{1},
$$
that is $v\in W_0^{1,\infty}(\Omega)$ and $\ \|\nabla v\|_{\infty} \leq
\Lambda_\infty$. This ends the proof of (\ref{G-liminf}).\\
Let us now prove the $\Gamma-\limsup$ inequality. Take $(g,v)\in \M (\Omega) \times  W^{1,\infty}_0 (\Omega)$
such that:
$$
\int_\Omega d|g| \leq 1, \quad \|\nabla v\|_{\infty} \leq \Lambda_\infty.
$$
By setting $v_p={\lambda_p^{1/p}\over \Lambda_\infty}\,v$, we get a sequence such that:
$$
v_p\to v\mbox{ uniformly },\quad v_p\in W_0^{1,p}(\Omega),\quad
{\| \nabla v_p\|_p\over \lambda_p^{1/p}}= {\| \nabla v\|_p\over \Lambda_\infty}\le 1.
$$
To build a sequence of measures $g_p \in  L^{p'}(\Omega)$ satisfying $\|g\|_{p'} \leq
1$, we make a regularization by convolution:
$$\forall x \in \R^N, \quad \quad \quad g_p(x):=\int \rho_p(x-y)\ dg(y) $$
where $\rho_p:={1\over p}\rho(p \times \cdot)$ is a standard mollifier
obtained as in the proof of Lemma \ref{regularize}.
We thus get a family $(g_p)_{p>N}$ in ${\mathcal C}_c^\infty(\R^N)$
such that:
$$
g_p\ws g\mbox{ in $\M (\Omega)$ and } \|g_p\|_{p'}\le \int d\vert g\vert \le 1.$$
Finally, from the properties of $(v_p)_p$ and $(g_p)_p$, we have:
$$\lim_{p\to +\infty} G_p(g_p,v_p)=G(g,v).$$
\end{proof}

\CAAnewsection{4. THE LINK WITH AN OPTIMAL TRANSPORT PROBLEM.} \label{mongesect}

A reader familiar with the Monge-Kantorovich or optimal transportation
problem already recognized in problems $(\MP_\infty)$ and
$(\MP_\infty^*)$ two of its dual formulations. Let us introduce this
connection shortly. One of the advantages in exploiting this connection is
that sometime it is possible to compute explicitly or numerically the value of the
Wasserstein distance introduced below.
For example, we will use this explicit computability in section $\S 5$ to prove that 
$\Lambda_\infty=1/R_1$.

Given two probability measures $\alpha$ and $\nu$ on $\overline \Omega
$ the Monge problem (with the Euclidean norm as cost) is the following minimization problem:
\begin{equation}\label{monge}
\inf \left\{ \int_{\overline \Omega} |x-T(x)|d\alpha \,:\, T_\sharp \alpha=\nu \right\}
\end{equation}
where the symbol $T_\sharp \mu$ denotes the push forward of $\alpha$
through $T$ (i.e. $T_\sharp \alpha(B):= \alpha (T^{-1}(B))$ for every Borel
set $B$). A Borel map $T$ such that  $T_\sharp \alpha=\nu$ is called a
transport of $\alpha$ to $\nu$ and it is called an optimal transport if
it minimizes (\ref{monge}).
It may happens that the set of transports of $\alpha$ to $\nu$ is empty
(e.g. $\alpha=\delta_0$ and $\nu=\frac{1}{2}(\delta_1 + \delta_{-1})$ or
that the minimum is not achieved (e.g. $\alpha={\mathcal H}^1
_{\{0\}\times [0,1]}, \ \  \nu=\frac{1}{2} ({\mathcal H}^1 _{\{-1\}\times [0,1]}+{\mathcal H}^1 _{\{1\}\times [0,1]}$). 
To deal with these situations in  the '40 Kantorovich proposed 
the following relaxation of the problem above 
\begin{equation}\label{kantorovich}
\min \left\{ \int_{\overline \Omega \times \clomega} |x-y|d\gamma :
\pi^1_\sharp \gamma= \alpha, \ \pi^2_\sharp \gamma=  \nu \right\}.
\end{equation}
A measure $\gamma$ such that $\pi^1_\sharp \gamma= \alpha, \
  \pi^2_\sharp \gamma=  \nu $ is called a transport plan of $\alpha$
  to $\nu$.
Notice that by the direct method of the Calculus of Variations
the minimum in (\ref{kantorovich}) is
achieved. The minimal value is usually called Wasserstein distance of
$\alpha$ and $\nu$ and it is denoted by $\Wa(\nu, \alpha)$.

Let $f_\infty \in \Pb(\Omega)$ be the measure defined in Lemma \ref{thm-ptoinfty},
and consider its Wasserstein distance from $\Pb(\partial \Omega)$,
i.e. the following  variational problem defined on $ \Pb(\partial \Omega)$
\begin{equation}\label{minform}
\inf_{\nu \in  \Pb(\partial \Omega) } \Wa(f_\infty, \nu).
\end{equation}
By definition, solving problem (\ref{minform}) is equivalent to solve
\begin{equation}\label{kantorovich2}
\inf \left\{ \int_{\clomega \times \clomega} |x-y|d\gamma :
\pi^1_\sharp \gamma= f_\infty, \ \pi^2_\sharp \gamma\in \Pb(\partial \Omega) \right\}
\end{equation}

The following proposition is a variant of the classical
  Kantorovich duality (see for example theorem 1.3 of \cite{Vil}) and
  it will help us to connect problems (\ref{kantorovich2})
with problems $(\MP_\infty)$ and $(\MP_\infty^*)$.

\begin{proposition} The following equalities hold
 \begin{equation}\label{duality1} 
\inf \left\{ \int_{\clomega \times \clomega} |x-y|d\gamma :
\pi^1_\sharp \gamma= f_\infty, \ \pi^2_\sharp \gamma\in \Pb(\partial \Omega) \right\}
\,=\,-{1\over \Lambda_\infty} \min (\MP_\infty)\,=\,{1 \over \Lambda_\infty} \min(\MP_\infty^*).
\end{equation}
\end{proposition}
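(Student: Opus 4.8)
The plan is to prove the two equalities in \eqref{duality1} separately and then combine them. The second equality, $-\frac{1}{\Lambda_\infty}\min(\MP_\infty)=\frac{1}{\Lambda_\infty}\min(\MP_\infty^*)$, is immediate: it is just $\frac{1}{\Lambda_\infty}$ times the identity $\min(\MP_\infty)=-\min(\MP_\infty^*)$ established in Proposition \ref{duality}. So the real content is the first equality, which identifies the Wasserstein-type quantity \eqref{kantorovich2} with $-\frac{1}{\Lambda_\infty}\min(\MP_\infty)$; this is the Kantorovich duality step.

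First I would rewrite $-\min(\MP_\infty)$ using the definition of $(\MP_\infty)$: it equals
$$
\sup\Big\{\langle f_\infty,u\rangle:\ u\in\Lip(\Omega),\ |\nabla u|\le\Lambda_\infty\text{ a.e.},\ u=0\text{ on }\partial\Omega\Big\}.
$$
Dividing by $\Lambda_\infty$ and rescaling $u\mapsto u/\Lambda_\infty$, this becomes
$$
-\frac{1}{\Lambda_\infty}\min(\MP_\infty)=\sup\Big\{\langle f_\infty,u\rangle:\ u\in\Lip(\Omega),\ |\nabla u|\le 1\text{ a.e.},\ u=0\text{ on }\partial\Omega\Big\},
$$
i.e. a supremum over $1$-Lipschitz functions vanishing on $\partial\Omega$. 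Now the classical Kantorovich--Rubinstein duality for the Euclidean cost states that $\Wa(f_\infty,\nu)=\sup\{\int u\,df_\infty-\int u\,d\nu:\ u\ 1\text{-Lipschitz}\}$. Taking the infimum over $\nu\in\Pb(\partial\Omega)$ and exchanging the $\inf_\nu$ with the $\sup_u$ (justified by a minimax argument: the set of $1$-Lipschitz functions restricted to $\overline\Omega$ is convex and compact for uniform convergence by Arzelà--Ascoli after normalizing, and $\Pb(\partial\Omega)$ is convex and weak-$*$ compact, with the payoff bilinear and continuous), one gets
$$
\inf_{\nu\in\Pb(\partial\Omega)}\Wa(f_\infty,\nu)=\sup_{u\ 1\text{-Lip}}\Big\{\int u\,df_\infty-\sup_{\nu\in\Pb(\partial\Omega)}\int u\,d\nu\Big\}
=\sup_{u\ 1\text{-Lip}}\Big\{\int u\,df_\infty-\max_{\partial\Omega}u\Big\}.
$$
Finally I would observe that replacing $u$ by $u-\max_{\partial\Omega}u$ does not change $\int u\,df_\infty-\max_{\partial\Omega}u$ (since $f_\infty$ is a probability measure) and produces a $1$-Lipschitz function which is $\le 0$ on $\partial\Omega$; conversely, adding a positive constant only increases the objective so the supremum is attained among $u$ with $u\ge 0$ on $\partial\Omega$. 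Combining, the constraint can be taken to be $u=0$ on $\partial\Omega$ (the two one-sided reductions pin it down), and one recovers exactly the rescaled $-\frac{1}{\Lambda_\infty}\min(\MP_\infty)$ above, together with the fact that \eqref{kantorovich2} equals \eqref{minform} by definition.

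The main obstacle is the rigorous justification of the $\inf_\nu\sup_u=\sup_u\inf_\nu$ exchange; I would handle it either by invoking a standard minimax theorem (Sion's) after passing to the compact convex set of $1$-Lipschitz functions on $\overline\Omega$ normalized by $u(x_0)=0$ for a fixed $x_0$, or — more in the spirit of the paper — by simply quoting the stated variant of Kantorovich duality (Theorem 1.3 of \cite{Vil}) applied with the second marginal constrained to $\Pb(\partial\Omega)$, which packages precisely this exchange. A minor technical point is ensuring the admissible $u$ in $(\MP_\infty)$, a priori only Lipschitz on $\Omega$ with the boundary condition in the trace/continuity sense, extends to a genuine $1$-Lipschitz function on all of $\overline\Omega$ after rescaling and vanishing on $\partial\Omega$; this is exactly the content of Lemma \ref{regularize} (which even gives smooth approximants), so it causes no difficulty.
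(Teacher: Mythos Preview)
The paper does not actually give a proof of this proposition: it simply states that it is ``a variant of the classical Kantorovich duality (see for example theorem 1.3 of \cite{Vil})'' and moves on. Your proposal therefore supplies details the authors chose to omit, and it does so along the natural route (Kantorovich--Rubinstein duality plus a reduction of the boundary constraint), in line with what the paper invokes.

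Your argument is essentially correct, but one step is not quite right as written. In the passage from
\[
\sup_{u\ 1\text{-Lip}}\Big\{\int u\,df_\infty-\max_{\partial\Omega}u\Big\}
\]
to the supremum over $1$-Lipschitz $u$ with $u=0$ on $\partial\Omega$, you argue by ``two one-sided reductions'': a shift gives $u\le 0$ on $\partial\Omega$, and ``adding a positive constant only increases the objective'' gives $u\ge 0$ on $\partial\Omega$. These two normalizations cannot be imposed simultaneously by constant shifts unless $u$ is constant on $\partial\Omega$, so the argument does not close. The clean fix is to observe that after the first shift (so $v:=u-\max_{\partial\Omega}u$ is $1$-Lipschitz with $v\le 0$ on $\partial\Omega$) one has $v(x)\le v(y)+|x-y|\le d_\Omega(x)$ for $y$ a nearest boundary point; hence $\int v\,df_\infty\le \int d_\Omega\,df_\infty$, and since $d_\Omega$ is $1$-Lipschitz with $d_\Omega=0$ on $\partial\Omega$, both suprema equal $\int d_\Omega\,df_\infty$. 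This also makes the minimax step optional: one can check the two inequalities directly using $d_\Omega$ on the dual side and the plan $(\mathrm{id},p_{\partial\Omega})_\sharp f_\infty$ (with $p_{\partial\Omega}$ a measurable selection of the nearest-point projection) on the primal side.

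Your final remark about the possible discrepancy between ``$1$-Lipschitz on $\overline\Omega$'' and ``$|\nabla u|\le 1$ a.e.\ in $\Omega$'' is pertinent for non-convex $\Omega$, and your pointer to Lemma \ref{regularize} is apt: the key observation is that the extension of $u$ by $0$ outside $\Omega$ (the first line of that lemma's proof) lies in $W^{1,\infty}(\R^N)$ with gradient bounded by $\Lambda_\infty$, hence is genuinely $\Lambda_\infty$-Lipschitz for the Euclidean distance on $\R^N$. This closes the loop between the two function classes.
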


An other way of expressing the link between the limit quantities obtained in
Lemma \ref{thm-ptoinfty} and the optimal transportation theory is via the following
Theorem \ref{main3}, which is the main result of this section and expresses in a
useful way the primal-dual optimality conditions coming from
Proposition \ref{duality}.

\begin{theorem}\label{main3}
The limits $(u_{\infty},f_\infty,\sigma_\infty,\xi_{\infty},\mu_\infty)$ obtained in Lemma \ref{lemEST} satisfy:
\begin{equation}\label{syst}
\left\{
\begin{array}{r c l}
\sigma_\infty&=&\xi_\infty\,\mu_\infty\\
\xi_{\infty}&=&\Lambda_{\infty}^{-1}\nabla_{\mu_\infty}u_{\infty},\quad \mu_\infty-\mbox{a.e. in }{\overline\Omega},\\
-{\rm div}(\nabla_{\mu_\infty}u_{\infty}\,.\,\mu_\infty)&=& \Lambda_{\infty}\,f_\infty,\quad\mbox{in
 the sense of distributions in } \Omega,\\
\vert \nabla_{\mu_\infty}u_{\infty}\vert&=&\Lambda_{\infty},\quad\mu_\infty-\mbox{a.e. in }{\overline\Omega}.\\
\end{array}
\right.
\end{equation}
In the above result $\nabla_{\mu_\infty}u_{\infty}$ denotes the tangential gradient
of $u_\infty$ to the measure $\mu_\infty$ (see Definition
\ref{tangrad} for details)
\end{theorem}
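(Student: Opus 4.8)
The plan is to derive the system \eqref{syst} from the optimality conditions relating the primal problem $(\MP_\infty)$ to its dual $(\MP_\infty^*)$, using the objects already at hand: $u_\infty$ minimizes $(\MP_\infty)$ (Proposition \ref{thm-ptoinfty}), while $\sigma_\infty = \xi_\infty \mu_\infty$ solves $(\MP_\infty^*)$ (Proposition \ref{min-sigma-inf}), and the two optimal values coincide (Proposition \ref{duality}). The first line of \eqref{syst} is just the factorization supplied by Lemma \ref{lemEST} (via lemma 3.1 of \cite{BBDP}), so nothing is to prove there. The third line is also already established in the proof of Proposition \ref{min-sigma-inf}: passing to the limit in the constraint $-{\rm div}(\sigma_p) = f_p$ in $\Omega$ gives $-{\rm div}(\sigma_\infty) = f_\infty$ in $\mathcal{D}'(\Omega)$, and once the second line is known this reads $-{\rm div}(\nabla_{\mu_\infty} u_\infty \cdot \mu_\infty) = \Lambda_\infty f_\infty$. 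So the real content is the pointwise identification of $\xi_\infty$ as the $\mu_\infty$-tangential gradient of $u_\infty$ rescaled by $\Lambda_\infty^{-1}$, together with the saturation $|\nabla_{\mu_\infty} u_\infty| = \Lambda_\infty$ $\mu_\infty$-a.e.

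First I would write the equality of values chain: by Propositions \ref{thm-ptoinfty}, \ref{duality} and \ref{min-sigma-inf},
$$
-\langle f_\infty, u_\infty\rangle \;=\; \min(\MP_\infty) \;=\; -\min(\MP_\infty^*) \;=\; -\Lambda_\infty \int_{\overline\Omega} |\sigma_\infty| \;=\; -\Lambda_\infty \int_{\overline\Omega} |\xi_\infty|\, d\mu_\infty.
$$
On the other hand, since $-{\rm div}(\sigma_\infty) = f_\infty$ in $\Omega$, $\sigma_\infty$ is supported in $\overline\Omega$, and $u_\infty \in \Lip(\Omega)\cap\C_0(\Omega)$ with $u_\infty = 0$ on $\partial\Omega$, an integration by parts (justified by the regularization Lemma \ref{regularize}, which produces $\Lambda_\infty$-Lipschitz approximants $u_n \in \C_c^\infty$ vanishing near $\partial\Omega$) gives
$$
\langle f_\infty, u_\infty\rangle \;=\; \int_{\overline\Omega} \nabla_{\mu_\infty} u_\infty \cdot \xi_\infty \, d\mu_\infty,
$$
where the tangential gradient $\nabla_{\mu_\infty} u_\infty$ appears because $\sigma_\infty = \xi_\infty\mu_\infty$ is concentrated on $\mu_\infty$ and the duality pairing of a Lipschitz function against a measure-valued vector field sees only the component of $\nabla u_\infty$ tangent to $\mu_\infty$ (this is the defining property of the tangential gradient, cf. Definition \ref{tangrad}). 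Combining the two displays yields
$$
\int_{\overline\Omega} \nabla_{\mu_\infty} u_\infty \cdot \xi_\infty \, d\mu_\infty \;=\; \Lambda_\infty \int_{\overline\Omega} |\xi_\infty| \, d\mu_\infty.
$$
Since $u_\infty$ is $\Lambda_\infty$-Lipschitz the tangential gradient satisfies $|\nabla_{\mu_\infty} u_\infty| \le \Lambda_\infty$ $\mu_\infty$-a.e., hence the integrand on the left is pointwise bounded by $\Lambda_\infty |\xi_\infty|$; equality of the integrals forces equality $\mu_\infty$-a.e., i.e. $\nabla_{\mu_\infty} u_\infty \cdot \xi_\infty = \Lambda_\infty |\xi_\infty|$ with $|\nabla_{\mu_\infty} u_\infty| = \Lambda_\infty$ wherever $\xi_\infty \neq 0$. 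The Cauchy–Schwarz equality case then gives $\xi_\infty = \Lambda_\infty^{-1}\nabla_{\mu_\infty} u_\infty$ on $\{\xi_\infty \neq 0\}$, which is all of $\overline\Omega$ up to a $\mu_\infty$-null set once we check $\mu_\infty(\{\xi_\infty = 0\}) = 0$.

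To dispatch that last point and obtain $|\nabla_{\mu_\infty} u_\infty| = \Lambda_\infty$ on all of $\overline\Omega$ (not just off $\{\xi_\infty=0\}$), I would argue that on the set $\{\xi_\infty = 0\}$ the measure $\sigma_\infty$ vanishes, so this set carries no mass of $\mu_\infty$ once one knows $\mu_\infty$ is the total-variation measure of $\sigma_\infty$ — but in fact the cleaner route is to revisit the approximating level: from $\sigma_p = |\nabla u_p|^{p-2}\nabla u_p/\lambda_p$ and $\mu_p = |\nabla u_p|^{p-2}/\lambda_p\,dx$ one has $|\sigma_p| = |\nabla u_p|\,\mu_p$ pointwise, and $|\nabla u_p| = \lambda_p^{1/p}(|\nabla u_p|^{p-2}/\lambda_p)^{1/(p-2)}\cdots$; passing to the limit, together with the already-proven $\int|\sigma_p| \to \Lambda_\infty\int|\sigma_\infty|$ and $\|\nabla u_p\|_p/\lambda_p^{1/p} = 1$, pins $|\xi_\infty| = 1$ and hence $|\nabla_{\mu_\infty} u_\infty| = \Lambda_\infty$ $\mu_\infty$-a.e. and $\mu_\infty$-non-degeneracy of $\xi_\infty$. \textbf{The main obstacle} is precisely this last identification: making rigorous the passage from the Euclidean $W^{1,p}$ bounds to the $\mu_\infty$-tangential statement $|\nabla_{\mu_\infty} u_\infty| = \Lambda_\infty$, i.e. showing that no mass of $\mu_\infty$ escapes into a region where $u_\infty$ fails to be "maximally stretched" in the $\mu_\infty$-tangential directions; this is where the fine theory of tangential gradients with respect to a measure (Definition \ref{tangrad} and the Bouchitté–Buttazzo–De Pascale framework of \cite{BBDP}) does the work, and where the integration-by-parts justification of Lemma \ref{regularize} must be invoked carefully to ensure the pairing $\langle f_\infty, u_\infty\rangle$ equals $\int \nabla_{\mu_\infty}u_\infty\cdot\xi_\infty\,d\mu_\infty$ rather than an integral involving the full Euclidean gradient.
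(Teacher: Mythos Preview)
Your approach is essentially that of the paper: obtain $\int u_\infty\,df_\infty = \Lambda_\infty \int |\xi_\infty|\,d\mu_\infty$ from the primal--dual optimality of $u_\infty$ and $\sigma_\infty$, integrate by parts to replace the left side by $\int \nabla_{\mu_\infty} u_\infty\cdot\xi_\infty\,d\mu_\infty$, and conclude pointwise from the Cauchy--Schwarz equality case. Two differences are worth noting. First, the integration by parts in the paper is carried out via Proposition~\ref{intPart}, not Lemma~\ref{regularize}; that formula is exactly what makes the \emph{tangential} gradient $\nabla_{\mu_\infty} u_\infty$ appear on the right (your regularization alone would give $\int \nabla u_n\cdot d\sigma_\infty$, and passing to the limit is precisely the content of Proposition~\ref{intPart}), and it also yields $\xi_\infty(x)\in T_{\mu_\infty}(x)$ $\mu_\infty$-a.e. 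Second, the paper does not invoke your direct bound $|\nabla_{\mu_\infty} u_\infty|\le \Lambda_\infty$; it instead reformulates the constraint $|\nabla u_\infty|\le\Lambda_\infty$ as the existence of $\zeta\in T_{\mu_\infty}^{\perp}$ with $|\nabla_{\mu_\infty} u_\infty+\zeta|\le\Lambda_\infty$ (citing \cite{J}), and then uses $\xi_\infty\in T_{\mu_\infty}$ to drop $\zeta$ in the inner product $\nabla_{\mu_\infty} u_\infty\cdot\xi_\infty=(\nabla_{\mu_\infty} u_\infty+\zeta)\cdot\xi_\infty\le\Lambda_\infty|\xi_\infty|$.

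Concerning what you call the ``main obstacle'' (handling the set $\{\xi_\infty=0\}$ and pinning $|\xi_\infty|=1$): the paper does not treat this point separately. After reaching the $\mu_\infty$-a.e.\ equality $\nabla_{\mu_\infty} u_\infty\cdot\xi_\infty=\Lambda_\infty|\xi_\infty|$, it simply writes ``consequently $|\nabla_{\mu_\infty} u_\infty|=\Lambda_\infty$ and $\xi_\infty=\Lambda_\infty^{-1}\nabla_{\mu_\infty} u_\infty$''. So your concern, while legitimate, is orthogonal to reproducing the paper's argument; the paper relies implicitly on the factorization of Lemma~3.1 in \cite{BBDP} and does not supply an independent proof that $\mu_\infty(\{\xi_\infty=0\})=0$.
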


The proof of Theorem \ref{main3} requires to perform an integration by parts
with respect to a measure.
In order to do that
we introduce, shortly,  the notion of tangent space to a measure and of tangential gradient to a measure.
This notion has first been introduced by Bouchitt\'e, Buttazzo and
Seppecher in \cite{BBS}, the case of interest here is
developed in \cite{J}: we now recall the main points tools in our setting.

Let us define the set
 \begin{equation}\label{def-N}
 \begin{array}{l l}
 {\mathcal N}:=\bigg\{&\xi\in {\rm L}^{\infty}_{\mu_\infty}(\R^{N},\R^{N}):\ \exists (u_n)_n,\ u_n\in {\mathcal C}^1(\R^{N}),
\\&
\hskip 1cm  u_n
\rightarrow 0
 \mbox{ uniformly on }\R^{N},\ \nabla u_n\ws \xi\ \mbox{in }\sigma\left({\rm L}^\infty_{\mu_\infty},{\rm L}^1_{\mu_\infty}\right)\bigg\}
 \end{array}
 \end{equation}
where $\sigma\left({\rm L}^\infty_{\mu_\infty},{\rm L}^1_{\mu_\infty}\right)$ denotes the weak star topology of
${\rm L}^\infty_{\mu_\infty}(\R^N,\R^N)$.
We notice that when $\mu_\infty$ is not absolutely continuous with respect
to the Lebesgue measure, this set is not necessarily reduced to zero.

The following results and notions may be found in \cite{J}:

\begin{proposition}
There exists a multi-function $T_{\mu_\infty}$ from $\R^N$ to $\R^N$ such that:
$$\eta \in {\mathcal N}^\perp \Leftrightarrow \eta(x)\in T_{\mu_\infty}(x)\ {\mu_\infty}-\mbox{a.e.}x.$$
\end{proposition}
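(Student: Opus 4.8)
The statement is the localization property of the tangent module to the measure $\mu_\infty$; the plan is to reduce it to an abstract lemma on weak-$*$ closed submodules of $\mathrm{L}^\infty_{\mu_\infty}(\R^N,\R^N)$ and then apply a measurable selection argument. Throughout, $\mathcal{N}^\perp$ is understood in the $\mathrm{L}^1_{\mu_\infty}$--$\mathrm{L}^\infty_{\mu_\infty}$ duality, and $\mu_\infty$ is a bounded positive Radon measure on the compact set $\barom$, so that $\mathrm{L}^1_{\mu_\infty}$ is separable.

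\textbf{Step 1 (module structure).} First I would check that $\mathcal{N}$ is stable under multiplication by $\mathcal{C}^1$ scalar functions. If $\xi\in\mathcal{N}$ is realized by $u_n\in\mathcal{C}^1(\R^N)$ with $u_n\to 0$ uniformly and $\nabla u_n\ws\xi$ in $\sigma(\mathrm{L}^\infty_{\mu_\infty},\mathrm{L}^1_{\mu_\infty})$, and $\varphi\in\mathcal{C}^1_c(\R^N)$, then $\varphi u_n\to 0$ uniformly and $\nabla(\varphi u_n)=\varphi\,\nabla u_n+u_n\,\nabla\varphi$: the first term converges weak-$*$ to $\varphi\xi$, since for $h\in\mathrm{L}^1_{\mu_\infty}$ one has $\langle\varphi\nabla u_n,h\rangle=\langle\nabla u_n,\varphi h\rangle\to\langle\xi,\varphi h\rangle=\langle\varphi\xi,h\rangle$; the second term tends to $0$ uniformly, hence weak-$*$. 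Thus $\varphi\xi\in\mathcal{N}$. Passing to the weak-$*$ closed linear span $\mathcal{M}$ of $\mathcal{N}$ (this does not change the polar, as $\mathcal{N}^\perp=\mathcal{M}^\perp$) and approximating an arbitrary $g\in\mathrm{L}^\infty_{\mu_\infty}$ by continuous functions in $\mathrm{L}^1_{\mu_\infty}$-norm, one upgrades this to: $\mathcal{M}$ is a weak-$*$ closed $\mathrm{L}^\infty_{\mu_\infty}$-submodule of $\mathrm{L}^\infty_{\mu_\infty}(\R^N,\R^N)$; in particular $\mathbf{1}_A\xi\in\mathcal{M}$ for every $\xi\in\mathcal{M}$ and every $\mu_\infty$-measurable set $A$.

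\textbf{Step 2 (localization lemma).} I would then prove that any weak-$*$ closed $\mathrm{L}^\infty_{\mu_\infty}$-submodule $\mathcal{M}$ of $\mathrm{L}^\infty_{\mu_\infty}(\R^N,\R^N)$ equals $\{\xi:\xi(x)\in V(x)\ \mu_\infty\text{-a.e.}\}$ for some $\mu_\infty$-measurable multifunction $x\mapsto V(x)$ with values linear subspaces of $\R^N$. Since $\mathrm{L}^1_{\mu_\infty}$ is separable, bounded weak-$*$ closed subsets of $\mathrm{L}^\infty_{\mu_\infty}$ are weak-$*$ metrizable, so one may pick a sequence $(\xi_k)_{k\in\N}$ in $\mathcal{M}$ that is weak-$*$ dense in $\mathcal{M}$, and set $V(x):=\mathrm{span}\{\xi_k(x):k\in\N\}$, a measurable subspace-valued map. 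For the inclusion $\mathcal{M}\subseteq\{\xi:\xi(x)\in V(x)\}$, argue by contradiction: if some $\xi\in\mathcal{M}$ had its component $\xi^\perp$ orthogonal to $V(\cdot)$ nonzero on a set $A$ of positive measure, then $\eta:=\mathbf{1}_A\,\xi^\perp\in\mathrm{L}^1_{\mu_\infty}$ would satisfy $\int\eta\cdot\xi_k\,d\mu_\infty=0$ for all $k$, hence $\int\eta\cdot\zeta\,d\mu_\infty=0$ for all $\zeta\in\mathcal{M}$ by weak-$*$ density, contradicting $\int\eta\cdot\xi\,d\mu_\infty=\int_A|\xi^\perp|^2\,d\mu_\infty>0$. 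For the reverse inclusion, given $\xi$ with $\xi(x)\in V(x)$ a.e., a measurable-basis extraction partitions $\R^N$ (up to a $\mu_\infty$-null set) into countably many measurable pieces $E_j$ on each of which a fixed finite subset of the $\xi_k$ is a basis of $V(x)$ with Gram determinant bounded below by some $1/m$; on each $E_j$, Cramer's rule expresses $\xi$ as a finite combination of those $\xi_k$ with bounded measurable coefficients, so $\mathbf{1}_{E_j}\xi\in\mathcal{M}$ by the module property, and $\sum_{j\le J}\mathbf{1}_{E_j}\xi=\mathbf{1}_{\cup_{j\le J}E_j}\xi\to\xi$ in $\mathrm{L}^1_{\mu_\infty}$ by dominated convergence, hence weak-$*$; thus $\xi\in\mathcal{M}$.

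\textbf{Step 3 (conclusion).} Setting $T_{\mu_\infty}(x):=V(x)^\perp$, which is again a $\mu_\infty$-measurable subspace-valued multifunction from $\R^N$ to $\R^N$, I conclude as follows. If $\eta\in\mathcal{N}^\perp=\mathcal{M}^\perp$, then since $\mathbf{1}_A\xi_k\in\mathcal{M}$ for every measurable $A$, we get $\int_A\eta\cdot\xi_k\,d\mu_\infty=0$, hence $\eta(x)\cdot\xi_k(x)=0$ $\mu_\infty$-a.e. for each $k$, i.e. $\eta(x)\perp V(x)$, i.e. $\eta(x)\in T_{\mu_\infty}(x)$ a.e. Conversely, if $\eta(x)\in T_{\mu_\infty}(x)$ a.e., then for every $\xi\in\mathcal{M}$ one has $\xi(x)\in V(x)$ a.e. by Step 2, so $\eta\cdot\xi=0$ a.e. and $\int\eta\cdot\xi\,d\mu_\infty=0$; hence $\eta\in\mathcal{M}^\perp=\mathcal{N}^\perp$. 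I expect the main obstacle to be the reverse inclusion in the localization lemma of Step 2, namely producing, measurably in $x$, a well-conditioned basis of the fiber $V(x)$ so that the $\mathrm{L}^\infty$-module structure can actually be invoked; this is where separability of $\mathrm{L}^1_{\mu_\infty}$ and the countable measurable partition are essential.
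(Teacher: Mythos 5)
The paper does not prove this proposition at all: it is quoted verbatim from the tangential-calculus framework of Bouchitt\'e--Buttazzo--Seppecher, and the text explicitly refers the reader to \cite{J} (``The following results and notions may be found in \cite{J}''). So the only comparison possible is with that standard proof, and your argument is essentially it: show that $\mathcal N$ (after taking the weak-$*$ closed span, which leaves $\mathcal N^\perp$ unchanged) is an ${\rm L}^\infty_{\mu_\infty}$-module, localize a weak-$*$ closed module as the sections of a measurable subspace-valued multifunction $x\mapsto V(x)$, and set $T_{\mu_\infty}(x):=V(x)^\perp$. Your Steps 1--3 are correct; the only points to tidy are routine: the measurability of $x\mapsto P_{V(x)}$ (obtained, e.g., as the pointwise limit of the projections onto ${\rm span}\{\xi_1(x),\dots,\xi_j(x)\}$, each measurable via pseudo-inverse formulas), and the extraction of a countable weak-$*$ dense family in the unbounded set $\mathcal M$, which should be done ball by ball ($\mathcal M\cap B_m$ is weak-$*$ compact and metrizable since ${\rm L}^1_{\mu_\infty}$ is separable, then take the union over $m$).

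One structural remark: the part you flag as the main obstacle, namely the reverse inclusion $\{\xi:\ \xi(x)\in V(x)\ \mu_\infty\mbox{-a.e.}\}\subseteq\mathcal M$ via measurable well-conditioned bases and Cramer's rule, is not needed for the stated equivalence. Your Step 3 only uses (i) the module property to get $\mathbf 1_A\,\xi_k\in\mathcal M$ (forward implication) and (ii) the easy inclusion $\mathcal M\subseteq\{\xi:\ \xi(x)\in V(x)\ \mu_\infty\mbox{-a.e.}\}$ applied to elements of $\mathcal N$ (converse implication). The full identification of $\mathcal M$ with the set of sections of $V$ is what one needs to characterize the bipolar $(\mathcal N^\perp)^\perp$, but for this proposition you may simply delete that part, which makes the proof shorter and avoids the delicate selection argument.
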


\begin{definition}
For ${\mu_\infty}-\mbox{a.e. } x$, we call $T_{\mu_\infty}(x)$
the tangent space to ${\mu_\infty}$ at $x$ and
denote by $P_{\mu_\infty}(x,\cdot)$ the orthogonal projection on $T_{\mu_\infty}(x)$.
\end{definition}

\begin{proposition}
Let $u \in\Lip(\R^N)$, there exists a unique function $\xi$ in ${\rm L}^\infty_{\mu_\infty}$ such that
 \begin{equation*}
   \left.
    \begin{array}{l c l}
    &(u_n)\in {\rm Lip}(\R^N), &\ \mbox{ equiLipschitz}\\
    & u_n\rightarrow u, &\ \mbox{uniformly on }\R^N\\
    \end{array}
   \right\}
  \Rightarrow P_{\mu_\infty}(\cdot, \nabla u_n(\cdot))\ws \xi.
 \end{equation*}
\end{proposition}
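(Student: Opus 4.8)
The plan is to exhibit an explicit candidate for $\xi$, to prove it is independent of the approximating sequence, and then to observe that uniqueness is automatic. For the candidate, note that since $u\in\Lip(\R^N)$ the mollifications $u^{(n)}:=u*\rho_{1/n}$ (with $\rho_n$ as in Lemma~\ref{regularize}) form an equi-Lipschitz sequence in ${\mathcal C}^\infty(\R^N)$ converging uniformly to $u$ on all of $\R^N$, and their gradients are bounded in ${\rm L}^\infty_{\mu_\infty}(\R^N,\R^N)=\big({\rm L}^1_{\mu_\infty}(\R^N,\R^N)\big)^*$. Banach--Alaoglu then provides a weak-$*$ cluster point $\eta$ of $(\nabla u^{(n)})_n$. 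Because $P_{\mu_\infty}(x,\cdot)$ is, for $\mu_\infty$-a.e.\ $x$, an orthogonal projection --- a symmetric idempotent matrix of norm $\le1$ --- for every $\phi\in{\rm L}^1_{\mu_\infty}$ one has $P_{\mu_\infty}(\cdot,\phi)\in{\rm L}^1_{\mu_\infty}$ and, along the subsequence where $\nabla u^{(n_k)}\ws\eta$,
$$
\int P_{\mu_\infty}(x,\nabla u^{(n_k)}(x))\cdot\phi(x)\,d\mu_\infty
=\int \nabla u^{(n_k)}(x)\cdot P_{\mu_\infty}(x,\phi(x))\,d\mu_\infty
\;\longrightarrow\; \int P_{\mu_\infty}(\cdot,\eta)\cdot\phi\,d\mu_\infty .
$$
Hence $P_{\mu_\infty}(\cdot,\nabla u^{(n_k)})\ws\xi:=P_{\mu_\infty}(\cdot,\eta)$, and this is my candidate.

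Next I would check that $\xi$ is canonical. The basic remark is that every $\zeta\in{\mathcal N}$ satisfies $P_{\mu_\infty}(\cdot,\zeta)=0$ $\mu_\infty$-a.e.: for any $\mu_\infty$-measurable selection $x\mapsto\beta(x)\in T_{\mu_\infty}(x)$ and any Borel set $A$, the field $\mathbf 1_A\beta$ is still $T_{\mu_\infty}$-valued, hence lies in ${\mathcal N}^\perp$ by the Proposition describing $T_{\mu_\infty}$, so $\int_A\zeta\cdot\beta\,d\mu_\infty=0$; choosing $\beta=P_{\mu_\infty}(\cdot,\zeta)$ and using $\zeta\cdot P_{\mu_\infty}(\cdot,\zeta)=|P_{\mu_\infty}(\cdot,\zeta)|^2$ gives the claim. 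Now if $\eta_1,\eta_2$ are weak-$*$ cluster points of the gradients of two equi-Lipschitz ${\mathcal C}^1$ sequences $a_n\to u$ and $b_n\to u$, then, extracting so that $\nabla a_n\ws\eta_1$ and $\nabla b_n\ws\eta_2$, the functions $a_n-b_n$ are equi-Lipschitz and ${\mathcal C}^1$, tend to $0$ uniformly, and satisfy $\nabla(a_n-b_n)\ws\eta_1-\eta_2$; by the very definition of ${\mathcal N}$ this forces $\eta_1-\eta_2\in{\mathcal N}$, and hence $P_{\mu_\infty}(\cdot,\eta_1)=P_{\mu_\infty}(\cdot,\eta_2)=\xi$. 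Consequently, for any equi-Lipschitz sequence $(v_n)_n\subset{\mathcal C}^1(\R^N)$ converging uniformly to $u$, every subsequence admits a further subsequence along which $\nabla v_n$ converges weak-$*$ to some cluster point, and therefore $P_{\mu_\infty}(\cdot,\nabla v_n)\ws\xi$ for every such sequence.

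It then remains to pass from ${\mathcal C}^1$ to merely Lipschitz approximants: given an arbitrary equi-Lipschitz $(u_n)_n\subset\Lip(\R^N)$ with $u_n\to u$ uniformly, I would replace each $u_n$ by a ${\mathcal C}^\infty$ regularization $\tilde u_n$, still equi-Lipschitz and still converging uniformly to $u$, apply the previous step to $(\tilde u_n)_n$, and verify that the Lipschitz perturbation $u_n-\tilde u_n$ does not affect the weak-$*$ limit of the projected gradients --- equivalently, that the space ${\mathcal N}$ is unchanged if its defining test sequences are allowed to be merely equi-Lipschitz rather than ${\mathcal C}^1$. This is the only delicate point, and I expect it to be the main obstacle, since a naive mollification argument does not suffice: convolution does not commute with weak-$*$ convergence tested against the possibly singular measure $\mu_\infty$, so one genuinely needs the finer structure theory of the tangent space recalled from \cite{J}. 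Once this is granted, uniqueness of $\xi$ is immediate, for applying the statement to the sequence $u*\rho_{1/n}$ forces any admissible $\xi$ to equal the weak-$*$ limit of $P_{\mu_\infty}(\cdot,\nabla(u*\rho_{1/n}))$.
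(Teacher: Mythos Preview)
The paper does not prove this proposition: it is one of the results explicitly quoted from \cite{J} (introduced by ``The following results and notions may be found in \cite{J}''), so there is no in-paper proof to compare your attempt against.

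On your sketch itself, the strategy --- exhibit a candidate via mollification, then use the definition of ${\mathcal N}$ to show independence from the approximating sequence --- is the natural one, and for ${\mathcal C}^1$ approximants it essentially works. One small gap: when you write ``extracting so that $\nabla a_n\ws\eta_1$ and $\nabla b_n\ws\eta_2$'', the two extractions are a priori along different index sets, so the claim $\nabla(a_n-b_n)\ws\eta_1-\eta_2$ is not yet justified. This is easily repaired by interlacing the two extracted subsequences into a single equi-Lipschitz ${\mathcal C}^1$ sequence converging to $u$ and then subtracting consecutive terms.

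The step you yourself flag as the obstacle --- passing from ${\mathcal C}^1$ to merely Lipschitz approximants --- is a genuine gap, and in fact the difficulty is slightly worse than you indicate. For a merely Lipschitz $u_n$ the gradient $\nabla u_n$ is only defined Lebesgue-a.e., and since $\mu_\infty$ need not be absolutely continuous with respect to Lebesgue measure, the object $P_{\mu_\infty}(\cdot,\nabla u_n(\cdot))$ is not obviously an element of ${\rm L}^\infty_{\mu_\infty}$ at all, so the weak-$*$ convergence in the statement requires interpretation before one can even attempt to prove it. Resolving this is precisely what the finer structure theory in \cite{J} provides, and it is why the present paper imports the result rather than reproving it.
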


\begin{definition}\label{tangrad}
The function $\xi$ appearing in the last proposition is called tangential gradient of $u$
to $\mu_\infty$ and is denoted by $\nabla_{\mu_\infty} u$.
\end{definition}

\begin{proposition}[Integration by parts formula]\label{intPart}
Let $\Psi \in\Lip(\R^N)$ and $\theta \in {\rm L}^1_{\mu_\infty}(\R^N,\R^N)$
such that $-{\rm div}(\theta \mu_\infty)$ belongs to ${\mathcal M}_b(\R^N)$.
Then
$$\theta(x)\in T_{\mu_\infty}(x)\ \mu_\infty-{a.e.},\quad \mbox{and} \quad -<{\rm div}(\theta {\mu_\infty}),\Psi>=
\int \theta\cdot \nabla_{\mu_\infty} \Psi\ d\mu_\infty.$$
\end{proposition}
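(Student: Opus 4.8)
The plan is to prove the two assertions separately, the first (tangentiality of $\theta$) being the substantive step, with the integration by parts formula then following almost formally. Throughout I set $\lambda := -{\rm div}(\theta\mu_\infty) \in \M_b(\R^N)$, so that by the distributional definition $\int \nabla\varphi\cdot\theta\,d\mu_\infty = \int \varphi\,d\lambda$ for every $\varphi \in \C_c^\infty(\R^N)$. Two preliminary remarks will be used repeatedly. First, since $\mu_\infty$ is carried by the compact set $\overline\Omega$, any test function $\varphi$ with ${\rm spt}(\varphi)\cap\overline\Omega=\emptyset$ gives $\int\varphi\,d\lambda=0$; hence ${\rm spt}(\lambda)\subset\overline\Omega$, so $\lambda$ pairs with any continuous function. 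Second, a mollification argument extends the identity $\int\nabla\varphi\cdot\theta\,d\mu_\infty=\int\varphi\,d\lambda$ from $\varphi\in\C_c^\infty(\R^N)$ to $\varphi\in\C^1_c(\R^N)$, since for such $\varphi$ the regularizations $\varphi*\rho_n$ converge to $\varphi$ together with their gradients uniformly. Finally, I fix once and for all a cutoff $\chi\in\C_c^\infty(\R^N)$ with $\chi\equiv 1$ on a neighbourhood of $\overline\Omega$, so that $\nabla\chi=0$ $\mu_\infty$-a.e.

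For the first assertion I show $\theta\in\mathcal{N}^\perp$, which by the proposition characterizing $T_{\mu_\infty}$ is equivalent to $\theta(x)\in T_{\mu_\infty}(x)$ $\mu_\infty$-a.e. Let $\xi\in\mathcal{N}$ and pick $u_n\in\C^1(\R^N)$ with $u_n\to0$ uniformly on $\R^N$ and $\nabla u_n\ws\xi$ in $\sigma({\rm L}^\infty_{\mu_\infty},{\rm L}^1_{\mu_\infty})$. Since $\theta\in{\rm L}^1_{\mu_\infty}$, testing the weak$^*$ convergence against $\theta$ gives $\int\theta\cdot\nabla u_n\,d\mu_\infty\to\int\theta\cdot\xi\,d\mu_\infty$. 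On the other hand $\chi u_n\in\C^1_c(\R^N)$, and because $\nabla\chi=0$ $\mu_\infty$-a.e.\ one has $\nabla(\chi u_n)=\nabla u_n$ $\mu_\infty$-a.e.; applying the extended divergence identity to $\varphi=\chi u_n$ yields $\int\theta\cdot\nabla u_n\,d\mu_\infty=\int\chi u_n\,d\lambda$, which tends to $0$ because $\chi u_n\to0$ uniformly and $\lambda$ is a bounded measure. Hence $\int\theta\cdot\xi\,d\mu_\infty=0$ for every $\xi\in\mathcal{N}$, i.e.\ $\theta\in\mathcal{N}^\perp$, proving $\theta(x)\in T_{\mu_\infty}(x)$ $\mu_\infty$-a.e.

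For the integration by parts formula I approximate $\Psi$ by its mollifications $\Psi_n:=\Psi*\rho_n\in\C^\infty(\R^N)$, which are equi-Lipschitz with constant $\Lip(\Psi)$ and converge to $\Psi$ uniformly; by the definition of the tangential gradient one then has $P_{\mu_\infty}(\cdot,\nabla\Psi_n)\ws\nabla_{\mu_\infty}\Psi$ in $\sigma({\rm L}^\infty_{\mu_\infty},{\rm L}^1_{\mu_\infty})$. Applying the divergence identity to $\varphi=\chi\Psi_n\in\C_c^\infty(\R^N)$ and again using $\nabla\chi=0$ $\mu_\infty$-a.e.\ together with $\chi\equiv1$ on ${\rm spt}(\lambda)\subset\overline\Omega$, I obtain $\int\nabla\Psi_n\cdot\theta\,d\mu_\infty=\int\Psi_n\,d\lambda$. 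The right-hand side converges to $\int\Psi\,d\lambda=\langle\lambda,\Psi\rangle$ by uniform convergence and boundedness of $\lambda$. For the left-hand side I invoke the first assertion: since $\theta(x)\in T_{\mu_\infty}(x)$ and $P_{\mu_\infty}(x,\cdot)$ is the orthogonal projection onto $T_{\mu_\infty}(x)$, the elementary identity $\theta(x)\cdot w=\theta(x)\cdot P_{\mu_\infty}(x,w)$ holds for every $w$, so $\int\nabla\Psi_n\cdot\theta\,d\mu_\infty=\int P_{\mu_\infty}(\cdot,\nabla\Psi_n)\cdot\theta\,d\mu_\infty\to\int\nabla_{\mu_\infty}\Psi\cdot\theta\,d\mu_\infty$ by the weak$^*$ convergence and $\theta\in{\rm L}^1_{\mu_\infty}$. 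Equating the two limits gives $\langle\lambda,\Psi\rangle=\int\theta\cdot\nabla_{\mu_\infty}\Psi\,d\mu_\infty$, which is the claimed formula.

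The delicate points are concentrated in the first assertion and in the localization. The genuine obstacle is that the test functions in the definition of $\mathcal{N}$, and the function $\Psi$ itself, need not be compactly supported, whereas the distributional meaning of $-{\rm div}(\theta\mu_\infty)$ is only prescribed against $\C_c^\infty$; the cutoff $\chi$, legitimate precisely because $\nabla\chi$ vanishes $\mu_\infty$-a.e.\ near $\overline\Omega$, is what reconciles the two, and its use hinges on the preliminary fact that $\lambda$ is carried by the compact set $\overline\Omega$. The remaining ingredient, the projection identity $\theta\cdot w=\theta\cdot P_{\mu_\infty}(\cdot,w)$, is elementary once tangentiality of $\theta$ is known, and it is exactly the mechanism that converts the ordinary gradient $\nabla\Psi_n$ appearing after integration by parts into the tangential gradient $\nabla_{\mu_\infty}\Psi$ in the limit.
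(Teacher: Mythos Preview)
The paper does not actually prove this proposition: it is listed among ``the following results and notions [that] may be found in \cite{J}'' and is quoted without argument. So there is no proof in the paper to compare against; I can only assess your proof on its own merits.

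Your argument is correct and follows the natural route. The key steps --- (i) localizing by a cutoff $\chi$ equal to $1$ near $\overline\Omega$ so that compactly supported test functions can be manufactured without altering anything $\mu_\infty$- or $\lambda$-a.e., (ii) showing $\theta\in\mathcal N^\perp$ by pairing the divergence identity with the approximating sequence from the definition of $\mathcal N$, and (iii) using the tangentiality of $\theta$ to replace $\nabla\Psi_n$ by $P_{\mu_\infty}(\cdot,\nabla\Psi_n)$ before passing to the limit --- are exactly the ingredients one expects, and each is justified. The only point worth flagging is that the equivalence $\eta\in\mathcal N^\perp\Leftrightarrow\eta(x)\in T_{\mu_\infty}(x)$ is stated in the paper without specifying the ambient space for $\eta$; you are implicitly using it for $\eta=\theta\in L^1_{\mu_\infty}$, which is how it is formulated and proved in \cite{J}, so this is fine.
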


In the previous results, we have defined the tangential gradient of functions in $\Lip(\R^N)$.
As we are dealing with functions on $\Lip(\Omega)$,
we will also need the following
$$ u\in  \Lip(\R^N),\ \quad u=0\ \mu_\infty\mbox{-a.e. in }\Omega \Rightarrow
\nabla_{\mu_\infty} u=0\  \mu_\infty\mbox{-a.e. in $\Omega$}$$
so that the tangential gradient of any function $u$ in $\Lip(\Omega)$ is well defined
via the restriction of the tangential gradient of any of its Lipschitz extension to $\R^N$.

\begin{proof}[Proof of Theorem \ref{main3}]
Using the duality relation between
$(\MP_\infty)$ and $(\MP^*_\infty)$ and the optimality of $\sigma_\infty=\xi_\infty\mu_\infty$ and $u_{\infty}$
(see Theorem \ref{thm-ptoinfty} and Proposition \ref{min-sigma-inf}), we get:
\begin{equation}\label{extr}
\int_{\Omega} u_{\infty}(x)\ df_\infty(x)=\Lambda_{\infty}\int_{\overline \Omega} \vert \xi_\infty(x)\vert\ d\mu_\infty(x).
\end{equation}
By Proposition \ref{intPart}, as  $-{\rm div}(\sigma_{\infty})\in {\mathcal M}_b(\R^N)$ and $u_\infty$ is zero outside $\Omega$, we can make
an integration by parts an get:
$$\int_{\Omega} u_{\infty}(x)\ df_\infty(x)
=\langle-{\rm div}(\xi_\infty\mu_\infty),u_{\infty}\rangle_{{\mathcal M}_b(\R^N), {\mathcal C}_c(\R^N)}
=\int_{\overline\Omega} \nabla_{\mu_\infty}u_{\infty}\cdot \xi_\infty\ d\mu_\infty.$$
Using (\ref{extr}), we get:
\begin{equation}\label{extr*}
\int_{\overline \Omega}  (\nabla_{\mu_\infty}u_{\infty}\cdot \xi_\infty)-\Lambda_{\infty}\vert \xi_\infty\vert\ d\mu_\infty=0.
\end{equation}
The constraint $\vert \nabla u_{\infty}\vert\le \Lambda_{\infty} \mbox{ a.e. in }\Omega$ is reformulated using the definitions
of $T_{\mu_\infty}$ and $\nabla_{\mu_\infty}$ as a constraint on $\nabla_{\mu_\infty} u_\infty$ by saying (see \cite{J}, Lemma 4.13 and proof of Theorem 5.1):
$$\exists \zeta\in {\rm L}_{\mu_\infty}^\infty(\R^N,\R^N)\mbox{ such that } \left\{
\begin{array}{l}
\displaystyle{ \zeta(x)\in T_{\mu_\infty}(x)^\perp,\quad \mu_\infty\mbox{-a.e.}x\in{\overline\Omega}}\\
\displaystyle{ \vert \nabla_{\mu_\infty} u_\infty(x) +\zeta(x) \vert \le \Lambda_{\infty},\quad  \mu_\infty\mbox{-a.e.}x\in{\overline\Omega}.}\\
\end{array}
\right.
$$
As $\xi_\infty(x)\in T_{\mu_\infty}(x)$ $\mu_\infty$-a.e, we have:
$$\nabla_{\mu_\infty} u_\infty(x)\cdot \xi_\infty(x)=(\nabla_{\mu_\infty} u_\infty(x)+ \zeta(x))\cdot \xi_\infty(x)\le \Lambda_\infty \vert \xi_\infty(x) \vert\ \mu_\infty\mbox{-a.e.}x\in{\overline\Omega}.$$
Combining this with (\ref{extr*}), we  obtain $\nabla_{\mu_\infty}u_{\infty}(x)\cdot \xi_\infty(x)= \Lambda_{\infty}\vert \xi_\infty(x) \vert$
$\mu_\infty-$almost everywhere and consequently:
$$\vert  \nabla_{\mu_\infty}u_{\infty}\vert =\Lambda_{\infty},\quad \xi_\infty={ \nabla_{\mu_\infty}u_{\infty}\over \Lambda_\infty}\quad \mu_\infty-\mbox{ a.e. in }{\overline \Omega}.$$
The second equality in (\ref{syst}) then follows from
$\sigma_\infty=\Lambda_\infty^{-1} \nabla_{\mu_\infty}u_{\infty}\,.\,\mu_\infty$.
\end{proof}

\CAAnewsection{5. SOME PROPERTIES OF THE LIMITS}\label{applicsect}

In this section we will use the optimal transport problem
to investigate more properties of $u_\infty$ and $f_\infty$ and to
give an alternative way of identifying $\Lambda_\infty$ which we hope
will be useful in the future.

We shall denote by $d_\Omega(x)$ the distance of a point $x$ of $\Omega$
from $\partial \Omega$ and we recall the notation
$$ R_1= \sup \{r | \ \exists \ x_0 \ \mbox{s.t.} \ B(x_0,r) \subset
\Omega\}.$$

The main theorem is the following:

\begin{theorem}\label{identifie} The limits  $u_\infty$, $f_\infty$ and
  $\Lambda_\infty$ satisfies the following:
\begin{enumerate}
\item $f_\infty$ maximizes $\Wa (\cdot, {\mathcal P}(\partial
  \Omega))$ in $\Pb(\Omega)$,
\item $\Lambda_\infty=\frac{1}{R_1}$,
\item $ spt (f_\infty) \subset argmax\ u_\infty \subset argmax\ d_\Omega.$
\end{enumerate}
\end{theorem}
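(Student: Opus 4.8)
The plan is to prove the three assertions more or less in the order they depend on one another, using the identifications from Section 4.

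\textbf{Step 1 (assertion 1).} I would first establish that $f_\infty$ maximizes $\Wa(\cdot,\Pb(\partial\Omega))$ over $\Pb(\Omega)$. The idea is to compare with the competitor $v_p/\|v_p\|_p$ appearing in Lemma 2.4, or more directly to use the $\Gamma$-convergence machinery of Proposition \ref{maximize}. Since $(f_\infty,u_\infty)$ minimizes $G_\infty$, for every probability measure $g$ on $\overline\Omega$ and every $1$-Lipschitz... more precisely every $W_0^{1,\infty}$ function $v$ with $\|\nabla v\|_\infty\le\Lambda_\infty$ we have $-\langle g,v\rangle\ge -\langle f_\infty,u_\infty\rangle$, i.e. $\langle f_\infty,u_\infty\rangle\ge\langle g,v\rangle$. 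Taking the sup over admissible $v$ and using the Kantorovich duality of the previous Proposition (equation (\ref{duality1})), the right-hand side is $\Lambda_\infty\,\Wa(g,\Pb(\partial\Omega))$ after normalization, while the left-hand side is $\Lambda_\infty\,\Wa(f_\infty,\Pb(\partial\Omega))$; hence $f_\infty$ is a maximizer. The one point needing care is that $G_\infty$'s constraint on $g$ is $\int d|g|\le 1$ rather than $g$ being a probability on $\overline\Omega$, but combining with the fact that only the value on admissible test functions matters, and that the Wasserstein functional is positively homogeneous, this is harmless.

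\textbf{Step 2 (assertion 2).} To identify $\Lambda_\infty=1/R_1$ I would use the explicit computability of the transport cost advertised in the text. On one hand, Lemma 2.4 already gives $\Lambda_\infty\le 1/R_1$. For the reverse, I would exploit Step 1: since $f_\infty$ maximizes the Wasserstein distance to $\Pb(\partial\Omega)$, it suffices to exhibit \emph{one} probability measure $g$ on $\Omega$ with $\Wa(g,\Pb(\partial\Omega))\ge R_1$, because then, chasing through (\ref{duality1}), one gets a lower bound forcing $\Lambda_\infty\ge 1/R_1$ — here I need to be careful about which way the normalization in (\ref{duality1}) cuts, so let me instead argue directly: take $x_0$ with $B(x_0,R_1)\subset\Omega$ and $g=\delta_{x_0}$; then $\Wa(\delta_{x_0},\nu)=\int|x_0-y|\,d\nu(y)\ge R_1$ for every $\nu\in\Pb(\partial\Omega)$ since $\partial\Omega$ is outside the open ball. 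Thus $\sup_{g\in\Pb(\Omega)}\Wa(g,\Pb(\partial\Omega))\ge R_1$. Combined with the duality $(\ref{duality1})$ and the characterization $\min(\MP_\infty)=-\langle f_\infty,u_\infty\rangle$ together with the relation $\min(\MP_p)\to\min(\MP_\infty)$ and the a priori bound $\min(\MP_p)=-1/p'$, one pins down $\Lambda_\infty$. I expect this step to require the cleanest bookkeeping: one must relate the normalization $\|u_\infty\|_\infty=1$ to the constant $R_1$ appearing in the transport distance, and the cone function $v(x)=\max\{R_1-|x-x_0|,0\}$ is the natural object realizing equality on both sides.

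\textbf{Step 3 (assertion 3).} For the chain of inclusions $\mathrm{spt}(f_\infty)\subset\mathrm{argmax}\,u_\infty\subset\mathrm{argmax}\,d_\Omega$, I would argue as follows. Since $\|u_\infty\|_\infty=1$ and, from the constraint, $u_\infty$ is $\Lambda_\infty=1/R_1$-Lipschitz vanishing on $\partial\Omega$, we get $u_\infty(x)\le d_\Omega(x)/R_1\le 1$ everywhere, with equality in the first inequality only where the Lipschitz bound is saturated along a geodesic to the boundary; in particular $\{u_\infty=1\}\subset\{d_\Omega=R_1\}=\mathrm{argmax}\,d_\Omega$, giving the second inclusion (and incidentally reconfirming $R_1=\max d_\Omega$, so $\mathrm{argmax}\,u_\infty=\{u_\infty=1\}$). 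For the first inclusion, I would use the optimality relation $\int u_\infty\,df_\infty=\Lambda_\infty\int|\xi_\infty|\,d\mu_\infty$ from (\ref{extr}) — equivalently $\int u_\infty\,df_\infty=\max(\MP_\infty)/(-1)=\Wa(f_\infty,\Pb(\partial\Omega))\cdot\Lambda_\infty$ which by Step 2 equals $1$. Since $f_\infty$ is a probability measure and $u_\infty\le 1$ with $\int u_\infty\,df_\infty=1$, we must have $u_\infty=1$ $f_\infty$-a.e., hence $\mathrm{spt}(f_\infty)\subset\overline{\{u_\infty=1\}}=\{u_\infty=1\}=\mathrm{argmax}\,u_\infty$ by continuity of $u_\infty$. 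The main obstacle here is ensuring $\int u_\infty\,df_\infty=1$ is available at this stage; it follows from the already-proved duality $\min(\MP_\infty)=-\langle f_\infty,u_\infty\rangle$ together with $\Lambda_\infty=1/R_1$ and the explicit value of the transport cost, i.e. it genuinely uses assertion 2, which is why I order the proof this way.
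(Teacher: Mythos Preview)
Your approach is correct and largely follows the paper's own proof for Steps~1, 2, and the second inclusion of Step~3. In particular, the paper also uses Proposition~\ref{maximize} to identify $(f_\infty,u_\infty)$ as a maximizer of $\langle g,v\rangle$ over the $G_\infty$-constraint set, then rewrites the inner supremum over $v$ as $\Lambda_\infty\,\Wa(g,\Pb(\partial\Omega))$, and combines this with the explicit value $\max_{g}\Wa(g,\Pb(\partial\Omega))=R_1$ and the identity $\langle f_\infty,u_\infty\rangle=1$ to obtain assertions~1 and~2 simultaneously. Your Step~2 is a little circuitous; note that the upper bound $\Wa(g,\Pb(\partial\Omega))\le R_1$ (transport each point to its nearest boundary point) is needed alongside your lower bound via $\delta_{x_0}$ to pin the maximum at exactly $R_1$, and that $\int u_\infty\,df_\infty=1$ is already established in the proof of Lemma~\ref{lemEST} (from $\int u_p\,df_p=1$ and uniform convergence), so the detour through $\min(\MP_p)=-1/p'$ is not needed.

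Where you genuinely differ from the paper is in the first inclusion of Step~3. The paper argues directly with the approximating sequence: if $u_\infty<1-\alpha$ on a ball $B(x,r)$, then by uniform convergence $u_p<1-\alpha/2$ there for large $p$, whence $\int_{B(x,r)}df_\infty\le\liminf_p\int_{B(x,r)}u_p^{p-1}\,dy\le\liminf_p(1-\alpha/2)^{p-1}|B(x,r)|=0$. Your argument---using that $f_\infty$ is a probability, $u_\infty\le 1$, and $\int u_\infty\,df_\infty=1$ to force $u_\infty=1$ $f_\infty$-a.e.---is cleaner and stays at the level of the limit objects. Since $\int u_\infty\,df_\infty=1$ is already available from Lemma~\ref{lemEST}, your claimed dependence of this step on assertion~2 is in fact unnecessary.
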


\begin{proof}[Proof of Theorem \ref{identifie}]  By propositions
\ref{propriegamma} and \ref{maximize}
the couple
$(f_\infty,u_\infty)$ minimizes $G_\infty$ or, which is equivalent, maximizes
\begin{eqnarray*}
 \max \{\langle g, v \rangle \ | \ \int_\Omega d|g| \leq
1, \ v \in \W ^{1,\infty}_0 (\Omega), \ \|\nabla v\|_{\infty} \leq \Lambda_\infty \}\\
= \max_{ g \in \Pb(\Omega)}\max  \{\langle g, v \rangle \ | \ v \in \W
^{1,\infty}_0 (\Omega),
\ \|\nabla v\|_{\infty} \leq \Lambda_\infty \}\\
= \max_{ g \in \Pb(\Omega)} \Lambda_\infty \Wa (g, \Pb(\partial \Omega)).
\end{eqnarray*}
We now remark that $ \max_{ g \in \Pb(\Omega)} \Wa (g, \Pb(\partial
\Omega))=R_1$ and that the maximal value is achieved exactly by the
probability measures concentrated on the set $\{x \in \Omega \ | \
d_\Omega (x)=R_1\}= argmax\  d_\Omega $. 
Then $\Wa(f_\infty, \Pb (\partial \Omega))=R_1$ and 
$f_\infty$ is concentrated on the set $argmax\  d_\Omega$.
Then from $1=\Lambda_\infty \Wa (f_\infty, \Pb(\partial
\Omega))=\Lambda_\infty R_1 $  it follows $\Lambda_\infty=\frac{1}{R_1}$.

Let us now prove $argmax\ u_\infty\subset argmax \ d_{\om}.$\\
For $x\in \Omega$, let $y\in\partial \Omega$ be a projection of $x$ on $\Dom$, we have:
$$u_\infty(x)=u_\infty(x)-u_\infty(y)\le \vert\vert \nabla u_\infty\vert\vert_\infty
\vert x-y\vert={1\over R_1}d_\Omega(x).$$
Now, if $x$ is in  $argmax\ u_\infty$, $u_\infty(x)=1$ and using the inequality above we
get $1\le  {1\over R_1}d_\Omega(x)$
which implies $d_\om(x)=R_1$.

Finally, let us show that $spt\ f_\infty\subset argmax\ u_\infty$.\\
Assume $x$ is a point out of $argmax\ u_\infty$.
Then it exists a ball $B(x,r)$ centered at $x$ of radius $r$ on which $u_\infty<1-\alpha$
with $\alpha>0$. As $u_p\to u_\infty$
uniformly, for $p$ big enough we have $u_p<1-{\alpha\over 2}$ on $B(x,r)$. This statement
implies:
\begin{eqnarray*}
\int_{B(x,r)}\ df_\infty (y)&\le& \liminf_{p\to +\infty} \int_{B(x,r)}f_p(y)\ dy\\
=\liminf_{p\to +\infty}  \int_{B(x,r)} u_p(y)^{p-1} dy
&\le& \liminf_{p\to +\infty} (1-\alpha/2)^{p-1}\omega_N r^N=0.
\end{eqnarray*}
Consequently $x\not\in spt\ f_\infty$.

\end{proof}

\begin{remark}
Examples are given in \cite{JuuLinMan} to illustrate that $u_\infty$ may differ
from $d_\Omega$, but it is still an open question whether one has
$argmax u_\infty = argmax\ d_\Omega$. In this respect, a close understanding
on the transport problem $(\MP_\infty)$ may yield that $spt(f_\infty) = argmax\ d_\Omega$
and thus answer this question.
\end{remark}

Next step would be to investigate some PDE properties of $u_\infty$
with the aim of understanding in which region is satisfied each part
of the equation (\ref{inftyeigen}). We can give some partial results on
that.

\begin{definition} For each $x \in \Omega$ we define its projection on $\partial \Omega$ as
$$ p_{\partial \Omega} (x)= \{z \in \partial \Omega \ | \ |x-z|=d_\Omega (x)\}.$$
\end{definition}
The transport set $T$ is given by
\begin{equation}
T=\{[x,y]\ | \ x \in spt (f_\infty) \ \mbox{and}\ y \in p_\Omega (x)\}.
\end{equation}

The transport set plays a crucial role in the theory of optimal transportation because
 it is the set on which the transport takes actually place. It should also
play a role in dividing the open set $\Omega$ in regions in which $u_\infty$  
satisfies different equations. The next proposition below goes in this direction.

\begin{proposition} The function $u_\infty$ is differentiable in $T\setminus
(spt(f_\infty)\cup \partial \Omega)$ moreover it satisfies 
$-\Delta_{\infty}u_{\infty}\le 0$ in the viscosity sense on $T\setminus
(spt(f_\infty)\cup \partial \Omega)$.
\end{proposition}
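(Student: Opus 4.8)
The plan is to exploit the structure of the transport set for the cost $|x-y|$ together with the optimality conditions already obtained for $u_\infty$. First I would recall the classical fact that, for the Euclidean-norm transport problem with source $f_\infty$ and target in $\Pb(\partial\Omega)$, an optimal potential (Kantorovich potential) can be taken to be $R_1 u_\infty = d_\Omega u_\infty$ up to scaling; indeed Theorem~\ref{identifie} gives $\Lambda_\infty=1/R_1$, $\|\nabla u_\infty\|_\infty=1/R_1$, and $u_\infty$ maximizes $\langle f_\infty,\cdot\rangle$ over $1/R_1$-Lipschitz functions vanishing on $\partial\Omega$, which is exactly the dual Kantorovich problem. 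On any transport ray $[x,y]$ with $x\in\mathrm{spt}(f_\infty)$ and $y\in p_{\partial\Omega}(x)$, the function $u_\infty$ must then be affine with slope exactly $1/R_1$ along the segment: this is the standard "the potential is affine along transport rays" lemma, which follows from the chain of inequalities $u_\infty(x)-u_\infty(z)\le \frac{1}{R_1}|x-z|$ for $z$ on the ray, forced to be equalities because $u_\infty(x)=1$ (from part~(3) of Theorem~\ref{identifie}, $\mathrm{spt}(f_\infty)\subset \mathrm{argmax}\,u_\infty$) and $u_\infty(y)=0$ and $|x-y|=R_1$.

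Next, for a point $x_0\in T\setminus(\mathrm{spt}(f_\infty)\cup\partial\Omega)$ lying on some ray $[x,y]$, differentiability of $u_\infty$ at $x_0$ should be obtained from the interior of the ray being covered by a unique direction: any two supporting segments through an interior point of a transport ray must be collinear (otherwise one could strictly shorten the transport, contradicting optimality, or contradict $1/R_1$-Lipschitzness). So $u_\infty$ has a well-defined gradient $\nabla u_\infty(x_0) = \frac{1}{R_1}\frac{y-x}{|y-x|}$, a unit-speed (scaled) direction along the ray; this is the standard argument that Kantorovich potentials are differentiable on the open transport set away from the source. Concretely I would show: the super- and sub-differential of $u_\infty$ at $x_0$ both reduce to this single vector, using that $u_\infty$ is affine along the ray (so the directional derivative along $\pm(y-x)$ is $\mp 1/R_1$) and $1/R_1$-Lipschitz (so no other direction can do better), which pins down a unique linear functional.

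Finally, for the viscosity inequality $-\Delta_\infty u_\infty\le 0$, i.e. $u_\infty$ is a viscosity subsolution of $\Delta_\infty u_\infty=0$ on $T\setminus(\mathrm{spt}(f_\infty)\cup\partial\Omega)$: I would argue that along each ray $u_\infty$ coincides locally with the affine (hence infinity-harmonic) function $x\mapsto 1 - \frac{1}{R_1}|x-x|$... more precisely with $u_\infty(x_0)+\frac{1}{R_1}\langle \frac{y-x}{|y-x|}, x_0-x\rangle$ pushed to a cone function $x\mapsto \mathrm{const} - \frac{1}{R_1}|x - y|$ which is a classical (absolutely minimizing) subsolution away from its vertex $y$. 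Since $u_\infty \ge$ (such a cone touching from below at $x_0$) and $u_\infty$ is differentiable at $x_0$ with the matching gradient, any $C^2$ test function $\varphi$ touching $u_\infty$ from above at $x_0$ must have $\nabla\varphi(x_0)=\nabla u_\infty(x_0)\ne 0$ and, comparing with the cone, $\langle D^2\varphi(x_0)\nabla\varphi(x_0),\nabla\varphi(x_0)\rangle \ge$ the same quantity for the cone, which is $\ge 0$; hence $-\Delta_\infty\varphi(x_0)\le 0$. The main obstacle I expect is the rigorous handling of branching/crossing of transport rays: one must ensure that through an interior point $x_0$ of the transport set there is a genuinely unique ray direction so that both differentiability and the one-sided cone bound hold; this requires the cyclical monotonicity of the optimal plan and the no-crossing property of optimal rays for the Euclidean cost, and is the only place where a careful measure-theoretic argument (rather than a soft one) is needed.
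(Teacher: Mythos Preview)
Your argument for differentiability on the open transport rays is essentially the paper's (which simply cites Ambrosio): from $u_\infty(y_1)=1$, $u_\infty(y_2)=0$, $|y_1-y_2|=R_1$ and $\|\nabla u_\infty\|_\infty\le 1/R_1$ one gets that $u_\infty$ is affine on $[y_1,y_2]$, and then $u_\infty$ is squeezed between the two smooth cones $1-\tfrac{1}{R_1}|\cdot-y_1|$ (from below) and $\tfrac{1}{R_1}|\cdot-y_2|$ (from above), which share the same gradient at any interior point $x_0$. This already gives differentiability, with no appeal to non-crossing of rays or cyclical monotonicity; your anticipated ``main obstacle'' does not arise.

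For $-\Delta_\infty u_\infty\le 0$ the paper takes a different and shorter route than your cone comparison: it invokes the fact (from \cite{JuuLinMan}) that $u_\infty$ is already a viscosity subsolution of $\min\{|\nabla u|/|u|-\Lambda_\infty,\ -\Delta_\infty u\}=0$ on all of $\Omega$, and then simply observes that at $x_0$ one has $|\nabla u_\infty(x_0)|=\Lambda_\infty$ while $u_\infty(x_0)<1$, so the first entry of the $\min$ is strictly positive for any admissible test function; the subsolution property then forces $-\Delta_\infty\varphi(x_0)\le 0$. Your direct approach has the advantage of being self-contained (it does not rely on the limit PDE), but as written it uses the wrong cone: the function $\mathrm{const}-\tfrac{1}{R_1}|\cdot-y|$ with vertex at the \emph{boundary} endpoint $y$ does not touch $u_\infty$ from below (near $y$ it lies above $u_\infty$, since $u_\infty(y)=0$). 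The cone that does the job is $1-\tfrac{1}{R_1}|\cdot-y_1|$ with vertex at the \emph{source} endpoint $y_1\in\mathrm{spt}(f_\infty)$: it lies below $u_\infty$ globally by the Lipschitz bound, touches at $x_0$, and is smooth there; then any $C^2$ function $\varphi\ge u_\infty$ with $\varphi(x_0)=u_\infty(x_0)$ also touches this cone from above, whence $D^2\varphi(x_0)\ge D^2c(x_0)$ and $\nabla\varphi(x_0)=\nabla c(x_0)$, giving $\Delta_\infty\varphi(x_0)\ge\Delta_\infty c(x_0)=0$. With this correction your argument goes through.
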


\begin{proof}
Let $x_0\in T\backslash spt(f_\infty)$. There exists $(y_1,y_2)\in spt(f_\infty) \times
\partial \Omega \subset argmax\ (u_\infty) \times \partial \Omega$ such that $x_0\in ]y_1,y_2[$.
The closure of the segment $]y_1,y_2[$ is called a
transport ray and for each $z\in ]y_1,y_2[$, $u_\infty$ satisfies
$$ u_\infty (z)=\Lambda_\infty |z-y_2|=u_\infty (y_1)-\Lambda_\infty |z-y_1|.$$
It follows by a classical argument (see for example Proposition 4.2 of \cite{Amb}) that $u_\infty$ is differentiable on this 
segment and that $\vert \nabla u_{\infty}(z)\vert=\Lambda_{\infty}$ for all
$z\in]y_1,y_2[$. As $x_0\not \in argmax\ u_\infty$ one get
\begin{equation}\label{eq1}
\Lambda_{\infty}u(x_0)<\vert \nabla u(x_0)\vert=\Lambda_{\infty}.
\end{equation}
By \cite{JuuLinMan}, $u_\infty$ is a viscosity sub-solution of
$$\min\{ {\vert \nabla u(x)\vert\over \vert u(x)\vert}-\Lambda_\infty , -\Delta_\infty
u\}=0,$$
i.e. $ \forall x \in \Omega$ and for all smooth $\varphi $ such that 
$\varphi \geq u_\infty$ in $\Omega$ and $\varphi(x)=u_\infty (x)$ one has 
$$\min\{ {\vert \nabla \varphi(x)\vert\over \vert \varphi(x)\vert}-\Lambda_\infty , 
-\Delta_\infty \varphi(x)\}\leq 0.$$
The differentiability of $u_\infty$ at $x_0$ together with (\ref{eq1})
implies that for every $\varphi$ as above
$$\min\{ {\vert \nabla \varphi(x_0)\vert\over \vert \varphi(x_0)\vert}-\Lambda_\infty , 
-\Delta_\infty \varphi(x_0)\}=\min\{ {\vert \nabla u_\infty (x_0)\vert\over \vert 
u_\infty(x_0)\vert}-\Lambda_\infty , 
-\Delta_\infty \varphi(x_0)\}\leq 0,$$
and then 
  $-\Delta_\infty \varphi(x_0)\leq 0$ which is, by definition,  
$-\Delta_\infty u_\infty(x_0)\le 0$ in the viscosity sense.
\end{proof}


\CAAnewsection{ACKNOWLEDGMENTS}

The research of the second author is supported by  the project 
{\it``Metodi variazionali nella teoria
  del trasporto ottimo di massa e nella teoria geometrica della
  misura''} of the program PRIN 2006 of the Italian Ministry of the
University and by the {\it ``Fondi di ricerca di ateneo''}  of the  University of Pisa. 
Part of this paper was written while the third author was Post-doc at the Scuola Normale Superiore in Pisa. All the authors gratefully acknowledge the hospitality of the universities of
Pisa and Toulon.


\end{document}